\newcommand\phaseinth[1]{\int_{\Omega}{#1} \;{\rm{d}} x}
\newcommand{\ppp}{$\mathds{P}$}
\renewcommand{\epsilon}{\varepsilon}
\newcommand{\ul}{u}
\newcommand{\utl}{\partial_t u}
\newcommand\vphihn{\varphi_h^n}
\newcommand\derivinth[1]{\frac{{#1}_h^{n+1} - {#1}_h^{n}}{\Delta t}}
\newcommand\hnorm[1]{\|{#1}\|_{1}}
\newcommand\lnormwH[2]{|\left[{#2}\right]^{\frac{1}{2}}{#1}|_{0}}
\newcommand\lnormH[1]{|{#1}|_{0}}
\newcommand\lnormhH[1]{|{#1}|_{h}}
\newcommand\vphih{\varphi_h}
\newcommand\vphihplus{\varphi^+_h}
\newcommand\vphihminus{\varphi^-_h}
\newcommand\uhl{u_h}
\newcommand\unjzerol{u^n_{j_0}}
\newcommand\unpjzerol{u^{n+1}_{j_0}}
\newcommand\unpil{u^{n+1}_{i}}
\newcommand\unpjl{u^{n+1}_{j}}
\newcommand\unjl{u^{n}_{j}}
\newcommand\timeint[1]{\int_0^T{#1} \;{\rm{d}} t}
\newcommand\ulinehn{u_h^n}
\newcommand\ulinehnp{u_h^{n+1}}
\newcommand\ulinehminus{u_h^-}
\newcommand\ulinehplus{u_h^{+}}
\newcommand\uhtl{\partial_t u_{h}}
\newcommand{\ot}{{\widetilde{\Omega}(t)}}
\newtheorem{Theorem}{Theorem}[section] 
\newtheorem{Lemma}[Theorem]{Lemma}
\newtheorem{Remark}[Theorem]{Remark}
\def\vphi{\varphi}
\newcommand{\bN}{{\mathbb N}}
\newcommand{\bR}{{\mathbb R}}
\newcommand{\bRplus}{{\mathbb R}_{>0}}
\newcommand{\setproblemtag}[1]{
	\let\ol\,  {\rm d}theproblem\theproblem
	\renewcommand{\theproblem}{#1}
	\g@ad\,  {\rm d}to@macro\endproblem{
		\ad\,  {\rm d}tocounter{problem}{-1}
		\global\let\theproblem\ol\,  {\rm d}theproblem}
	}
\title{Finite element approximation of a phase field model for tumour growth}
\author{Joe Eyles\footnote{Department of Mathematics,
University of Sussex, Brighton BN1 9HQ, UK},~ 
        Robert N\"{u}rnberg\footnote{Department of Mathematics, University of Trento, Trento, Italy}~~~and~ 
        Vanessa Styles$^*$}
\date{}
\begin{document}
\maketitle

\begin{abstract}
We consider a fully practical finite element approximation of a 
diffuse interface model for tumour growth that takes the form of a 
degenerate parabolic system.
In addition to showing stability bounds for the approximation, 
we prove convergence, and hence existence of a solution to this system in 
two space dimensions.
Several numerical experiments demonstrate the practicality 
and robustness of the proposed method.
\end{abstract}

\section{Introduction}

Mathematical modelling is an important tool in the study and treatment of tumours, and as such is an area of research that continues to attract a great deal of interest, see for example \cite{BellomoLiMaini08,cancer_modelling_overview, cristini_book} and the references therein. 
The mathematical model we consider is one in which the tumour is modelled as a continuum using partial differential equations (PDEs), rather than an individual-based cell model in which a collection of discrete cells undergo stochastic or deterministic behaviour. There is a wealth of literature on continuum tumour growth models, with some of the earliest dating from the 1970s, see \cite{greenspan1,greenspan2} in which the models take the form of free boundary problems, with the free boundary representing the boundary between the interior and exterior of the tumour. Free boundary problems for tumour growth have been successfully approximated using a number of diffuse interface approaches, see for instance \cite{ColliPierluigi;GilardiGianni;Hilhorst, Cristini2003, intro_1,chdtumourModel,GarckeLNS18}. 
In this paper, we analyse the two dimensional version of the diffuse interface model for tumour growth presented in \cite{eks}. 
It approximates the following free
boundary problem, which is one of the simplest mathematical descriptions of the growth and death of a 
tumour:
\begin{subequations} \label{eq:eks}
\begin{align}
\Delta u = 1, & \quad \mbox{ in } \ot, \label{eq_f2_uOnOmega} \\
\nabla u \cdot n + \frac{u}{\alpha} = Q, & \quad \mbox{ on } \Gamma(t), \label{eq_f2_uOnGamma} \\
V = \beta \kappa + \frac{u}{\alpha} , & \quad \mbox{ on } \Gamma(t). \label{eq_f2_vOnGamma} 
\end{align}
\end{subequations}
Here  $\Gamma(t)=\partial\ot$ is an evolving curve in $\bR^2$, with its interior $\ot$ 
representing the tumour cells, and $u$ corresponds to the 
tissue pressure in the tumour. Moreover, $V$ is the velocity of $\Gamma(t)$ 
in the direction of the normal $n$, while $\kappa$ denotes the
curvature of $\Gamma(t)$. The constant $Q \in \bRplus$ represents a
surface source, while $\alpha,\beta \in \bRplus$ are regularisation 
parameters.
The free boundary problem \eqref{eq:eks} was formally derived in \cite{eks} as 
a thin-rim limit of a reaction-diffusion system involving living and dead
tumour cells, a nutrient and a Darcy's law for the velocity and pressure.
The advantage of the limiting problem \eqref{eq:eks} is that the number of
unknowns is reduced to just the free boundary and the pressure.
Mathematically, \eqref{eq:eks} describes the evolution of the closed 
curve $\Gamma(t)$ via forced curvature flow, where the forcing 
depends on the solution of an elliptic PDE in its interior.

A possible diffuse interface approximation of \eqref{eq:eks}
is given as follows. 
Let $\Omega$ be a fixed, bounded domain in $\bR^2$, let $\vartheta\in\{0,1\}$
and let 
$\epsilon\in\bRplus$. Then find a phase field $\varphi_\epsilon:\Omega\to\bR$,
and a pressure $u_\epsilon:\Omega \to \bR$ such that
\begin{subequations} \label{eq:strong}
\begin{alignat}{2}
\epsilon^2\vartheta\partial_t u_\epsilon 
- \nabla \cdot (\zeta(\varphi_\epsilon)\nabla u_\epsilon) 
+\frac1{\epsilon} \delta(\varphi_\epsilon) (\frac{u_\epsilon}\alpha - Q )
+ \zeta(\varphi_\epsilon) & = 0
\qquad && \text{in } \Omega, \label{eq:stronga} \\
\epsilon\partial_t\varphi_\epsilon 
- \epsilon\beta\Delta\varphi_\epsilon 
+ \frac\beta\epsilon\partial W(\varphi_\epsilon) 
- \frac{c_W}{\alpha} u_\epsilon & \ni 0
\qquad && \text{in } \Omega, \label{eq:strongb} \\
\zeta(\varphi_\epsilon)\nabla u_\epsilon \cdot \nu 
= \nabla \varphi_\epsilon \cdot \nu & = 0 \qquad && \text{on }
\partial\Omega, \label{eq:strongc}
\end{alignat}
\end{subequations}
where $\nu$ denotes the outer normal on $\partial\Omega$.
In the above
\begin{equation} \label{eq:zeta}
\zeta(s) = \frac{1+s}{2}~~~\mbox{and}~~~ 
\delta(s) = \frac{2}{\pi} (1 - s^2) \qquad s \in \bR,
\end{equation}
and $W$ denotes the double--obstacle potential 
\begin{equation} \label{eq:obs}
W(s)=\frac12(1-s^2)+I_{[-1,1]}(s) \qquad s \in \bR,
\end{equation}
with $I_{[-1,1]}$ denoting the indicator function 
$$
	I_{[-1, 1]} (s) := 
	\begin{cases}
	+ \infty & \mbox{ for } |s| > 1,\\
	0 & \mbox{ for } |s| \leq 1,
	\end{cases}
$$
see \cite{Blowey_Elliott_double_obs}. Moreover, $c_W = \frac12 \int_{-1}^1 \sqrt{2W(s)}\; {\rm d}s = \frac\pi4$ 
and $\partial W$ denotes the subdifferential of $W$, so that
$\partial W(s) = - s + \partial I_{[-1,1]}(s)$.

The model \eqref{eq:strong}, with $\vartheta=0$, has recently been considered
in \cite{eks}. There it was shown, using formal asymptotic analysis, that
in the sharp interface limit, $\epsilon\to0$, the original problem 
\eqref{eq:eks} is recovered. Moreover, a finite element approximation of
\eqref{eq:strong}, with $\vartheta=0$, was introduced, and several numerical
simulations were presented. 
In this paper, we are interested in the numerical analysis of a suitable
approximation of \eqref{eq:strong}. To this
end, we first of all note that it is a simple matter, on using the results in
\cite{exist_unique_phase}, 
to extend the formal asymptotic  
analysis in \cite{eks} to the case $\vartheta = 1$.
In particular, \eqref{eq:eks} is recovered as the sharp interface limit of
\eqref{eq:strong} with $\vartheta=1$, as $\epsilon\to0$.
Moreover, what makes the analysis of \eqref{eq:strong} challenging is the
presence of the degenerate coefficients $\delta(\varphi_\epsilon)$ and
$\zeta(\varphi_\epsilon)$ in \eqref{eq:stronga}. In the elliptic case,
when $\vartheta=0$, it does not seem to be easily possible to prove convergence
for the finite element approximation introduced in \cite{eks}. Hence, in this
paper, we consider the case $\vartheta=1$, which changes \eqref{eq:stronga} 
to a parabolic problem. 
Although there is no bio-physical basis for the added term
$\epsilon^2\partial_t u_\epsilon$, it yields additional smoothness for
$u_\epsilon$ that enables us to derive a convergence proof. 
However, we note that the model of interest to bio-physicists is
\eqref{eq:eks}, which is not impacted by the additional term in the 
phase field model. 
We remark that
\eqref{eq:strong} with $\vartheta=1$ is closely related to the problem studied
in \cite{digm}. That is why many of the ingredients for our convergence proof
are based on extending the techniques in \cite{digm} to the more complicated
problem \eqref{eq:strong}. 

The remainder of the paper is organised as follows.
In Section~\ref{sec:weak} we present a weak formulation of \eqref{eq:strong} 
and introduce a finite element approximation. We prove the well-posedness and a
maximum principle for the discrete system.
Convergence of the discrete solutions to a weak solution of \eqref{eq:strong} 
is shown in Section~\ref{sec:convergence},
and some numerical simulations are presented in Section~\ref{sec:nr}.
 
We end this section with a few comments about notation.
The $L^2$--inner product on $\Omega$ is denoted by $(\cdot,\cdot)$.
We adopt the standard notation for Sobolev spaces, denoting the norm of
$W^{\ell,p}(\Omega)$ ($\ell \in \bN$, $p \in [1, \infty]$) by
$\|\cdot \|_{\ell,p}$ and the semi-norm by $|\cdot |_{\ell,p}$. For
$p=2$, $W^{\ell,2}(\Omega)$ will be denoted by
$H^{\ell}(\Omega)$ with the associated norm and semi-norm written,
as respectively,  
$\|\cdot\|_{\ell}$ and $|\cdot|_{\ell}$. In addition, we adopt the standard notation $W^{\ell,p}(a,b;X)$
($\ell \in \bN$, $p \in [1, \infty]$, $(a,b)$ an 
interval in $\bR$, $X$ a Banach space) 
for time dependent spaces
with norm $\|\cdot\|_{W^{\ell,p}(a,b;X)}$.
Once again, we write $H^{\ell}(a,b;X)$ if $p=2$. 
Furthermore, $C$ denotes a generic constant independent of 
the mesh parameter $h$ and the time step $\Delta t$, see below.
 
\setcounter{equation}{0}
\section{Weak formulation and finite--element approximation}
\label{sec:weak}

In what follows, we let $\vartheta = 1$, 
fix a value $\epsilon\in\bRplus$ and drop the subscript
$\cdot_\epsilon$ in \eqref{eq:strong} for notational convenience. We also
fix a final positive time $T$.

The obstacle nature of \eqref{eq:obs} means that \eqref{eq:strongb} will lead
to a variational inequality. To this end, we introduce the convex subset
$$K=\{\eta\in H^1(\Omega)| \, |\eta|\leq 1~\mbox{in}~\Omega\}$$ of 
$H^1(\Omega)$. Then a weak formulation of \eqref{eq:strong} is given as
follows.

\ppp\
Let $u(0) = u_0 \in H^1(\Omega)$ and $\varphi(0) = \varphi_0 \in K$.
Then, for $t\in (0,T]$ find $(u(t),\varphi(t)) \in H^1(\Omega)\times K$ 
such that 
\begin{subequations} \label{eq:weak}
\begin{align} \label{equation_phase_weak_u}
&\varepsilon^2 \left( \partial_t u , \eta \right) +\left( \zeta(\varphi) \nabla {u}, \nabla \eta \right)
+ \frac{1}{\epsilon\alpha} \left(\delta(\varphi) {u}, \eta\right)  
	= \frac{Q}{\varepsilon}\left(\delta(\varphi), \eta \right) -\left(\zeta(\varphi), \eta \right)
	\quad \forall \eta \in H^1(\Omega),\\
& \varepsilon \left(\partial_t \varphi ,\rho - \varphi\right) 
+ \varepsilon \beta    \left( \nabla \varphi ,\nabla (\rho - \varphi)\right) 
 -  \frac{\beta}{\varepsilon}      \left(\varphi, \rho - \varphi \right) 
 \geq\frac{c_W}{\alpha}   \left(u, \rho - \varphi\right)
~~\forall \rho \in K. \label{equation_phase_weak_v}
\end{align} 
\end{subequations}

We consider the finite element approximation of \ppp\ under the following
assumptions on the mesh:
\begin{itemize} 
\item[(A)] Let $\Omega$ be a polygonal domain. Let 
 $\{{\cal T}_h\}_{h>0}$ be a quasi-uniform family of partitionings of $\Omega$ 
 into disjoint open simplices $\sigma$ with $h_{\sigma}:={\rm diam}(\sigma)$ 
 and $h:=\max_{\sigma \in {\cal T}_h}h_{\sigma}$, so that 
 $\overline{\Omega}=\cup_{\sigma\in{\cal T}_h}\overline{\sigma}$.
In addition, it is assumed that all simplices $\sigma \in {\cal T}_h$ are 
acute. 
\end{itemize}
Associated with $\mathcal{T}_h$ is the finite element space
\[
S_h := \Set{ \eta_h \in C^0(\overline\Omega) |  \eta_h |_{\sigma} 
\mbox{ is linear } \forall \sigma \in \mathcal{T}_h} .
\]
We also introduce 
\[
	K_h := S_h \cap K.
\]
Let $J$ be the set of nodes of ${\cal T}_h$ and $\{p_{j}\}_{j \in J}$ the 
coordinates of these nodes. 
Let $\{\chi_{j}\}_{j\in J}$, 
be the standard basis functions for $S_h$; that is $\chi_{j} \in S_h$ 
and $\chi_j(p_{i})=\delta_{ij}$ for all $i,j \in J$.
We denote by $I_h : C^0(\overline\Omega) \to S_h$ the Lagrange 
interpolation operator onto $S_h$. 
We denote the discrete $L^2$--inner product on $C^0(\overline\Omega)$ by
$$
(u,v)_h:= (I_h(u v),1) \qquad u,v \in C^0(\overline\Omega),
$$ 
and we set $\lnormhH{v}^2 := (v,v)_h$. 

In addition to $\mathcal{T}_h$, let 
$0= t_0 < t_1 < \ldots < t_{N-1} < t_N = T$ be a
partitioning of $[0,T]$ into uniform time steps 
$t_n - t_{n-1} = \Delta t = T/N$, $n=1,\ldots, N$.
Then we consider the following finite element approximation of \ppp.

\ppp$_h$\ 
Let $u_h^0 := I_h u_0$ and $\vphi_h^0 := I_h \varphi_0$. Then, for
$n = 0 , \ldots , N-1$, 
find $u_h^{n+1}\in S_h$ and $\vphi_h^{n+1} \in K_h$ such that 
\begin{subequations} \label{eq:fea}
\begin{align}
\label{equation_phase_FEM_u}
&  \frac{\varepsilon^2}{\Delta t}\left( u_{h}^{n+1}-{u}_h^n, \eta_h \right)_h +\big( \zeta(\varphi^{n}_h) \nabla {u}_h^{n+1}, \nabla \eta_h \big)
+\frac{1}{\epsilon\alpha} \big(\delta(\varphi_h^n){u}_h^{n+1},\eta_h\big)_h  
\nonumber\\ & \qquad \qquad 
=\frac{Q}{\varepsilon} \big(\delta(\varphi_h^n),\eta_h\big)_h-\big(\zeta(\varphi_h^n),\eta_h\big)_h~~\forall \eta_h \in S_h \\
& \frac\epsilon{\Delta t}\left( \vphi_{h}^{n+1}-\vphi_h^n, 
\rho_h - \varphi_h^{n+1} \right)_h 
	+ \epsilon\beta \left(\nabla \varphi_h^{n+1} , \nabla \rho_h - \nabla \varphi_h^{n+1}\right)  \nonumber \\
	&\hspace{20mm} - \frac{\beta}{\epsilon} \left(  \varphi_h^{n+1} , \rho_h  - \varphi_h^{n+1}\right)_h
	\geq 
	\frac{c_W}{\alpha} \left( u_h^{n+1}, \rho_h - \varphi_h^{n+1}\right)_h
	 ~~\forall \rho_h \in K_h.\label{equation_phase_FEM_v}
\end{align}
\end{subequations}
We note that the discretisation in \eqref{eq:fea} is chosen such that the
linear system \eqref{equation_phase_FEM_u} decouples from the variational
inequality \eqref{equation_phase_FEM_v}. Hence in practice we can first solve
\eqref{equation_phase_FEM_u} to obtain $u_{h}^{n+1}$, and then find
$\vphi_h^{n+1} \in K_h$ solving \eqref{equation_phase_FEM_v}. 

In what follows, we will make use of the standard inequality
\begin{equation}
\label{eq_h_norm_bounded}
	\lnormH{v_h}^2 \leq \lnormhH{v_h}^2 \leq 4 \lnormH{v_h}^2
\qquad v_h \in S_h.
\end{equation}

\begin{Lemma}\label{lem_exist}
Let $u_h^n \in S_h$ and $\varphi_h^n \in K_h$. Then for $\Delta t<\frac{\varepsilon^2}\beta$ there exists
a unique solution $(u_h^{n+1}, \varphi_h^{n+1}) \in S_h \times K_h$ to 
\eqref{eq:fea}.
\end{Lemma}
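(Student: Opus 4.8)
The plan is to exploit the decoupled structure of \eqref{eq:fea} already noted after its statement: the linear equation \eqref{equation_phase_FEM_u} determines $u_h^{n+1}$ without reference to $\varphi_h^{n+1}$, after which the variational inequality \eqref{equation_phase_FEM_v} determines $\varphi_h^{n+1}$ from the now-known $u_h^{n+1}$. I would therefore establish existence and uniqueness for the two subproblems in turn; uniqueness of each then gives uniqueness of the pair.

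For $u_h^{n+1}$, I would note that \eqref{equation_phase_FEM_u} is a finite-dimensional linear system, so by the Fredholm alternative it is enough to show that the associated bilinear form
$$a(v_h,\eta_h) := \frac{\varepsilon^2}{\Delta t}(v_h,\eta_h)_h + \big(\zeta(\varphi_h^n)\nabla v_h,\nabla\eta_h\big) + \frac1{\epsilon\alpha}\big(\delta(\varphi_h^n)v_h,\eta_h\big)_h$$
is coercive on $S_h$, equivalently that $a(v_h,v_h)=0$ forces $v_h=0$. Since $\varphi_h^n\in K_h$ gives $|\varphi_h^n|\le1$, both $\zeta(\varphi_h^n)=\tfrac12(1+\varphi_h^n)\ge0$ and $\delta(\varphi_h^n)=\tfrac2\pi(1-(\varphi_h^n)^2)\ge0$, so the last two terms of $a(v_h,v_h)$ are nonnegative, while the mass term $\frac{\varepsilon^2}{\Delta t}\lnormhH{v_h}^2$ is strictly positive unless $v_h=0$. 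Coercivity is then immediate, yielding a unique $u_h^{n+1}\in S_h$; note that this step needs no restriction on $\Delta t$.

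For $\varphi_h^{n+1}$, with $u_h^{n+1}$ now fixed, \eqref{equation_phase_FEM_v} is a variational inequality over the nonempty, closed, convex (indeed bounded) set $K_h\subset S_h$. I would write it as: find $\varphi_h^{n+1}\in K_h$ with $b(\varphi_h^{n+1},\rho_h-\varphi_h^{n+1})\ge\langle F,\rho_h-\varphi_h^{n+1}\rangle$ for all $\rho_h\in K_h$, where
$$b(\psi_h,\rho_h) := \frac\epsilon{\Delta t}(\psi_h,\rho_h)_h + \epsilon\beta(\nabla\psi_h,\nabla\rho_h) - \frac\beta\epsilon(\psi_h,\rho_h)_h$$
and $F$ collects the data terms $\frac\epsilon{\Delta t}(\varphi_h^n,\cdot)_h+\frac{c_W}\alpha(u_h^{n+1},\cdot)_h$. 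The obstacle constraint $\partial I_{[-1,1]}$ is absorbed into the restriction $\rho_h,\varphi_h^{n+1}\in K_h$, so no subdifferential appears explicitly. Since $b$ is symmetric, the inequality is exactly the characterisation of the minimiser over $K_h$ of the quadratic functional $\tfrac12 b(\rho_h,\rho_h)-\langle F,\rho_h\rangle$, and existence and uniqueness follow from the standard theory of variational inequalities as soon as $b$ is shown to be coercive on $S_h$.

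Coercivity of $b$ is the crux of the argument, and the only point at which the time-step restriction is used. Testing with $\psi_h$ gives
$$b(\psi_h,\psi_h) = \Big(\frac\epsilon{\Delta t}-\frac\beta\epsilon\Big)\lnormhH{\psi_h}^2 + \epsilon\beta|\psi_h|_1^2,$$
where the indefinite contribution $-\frac\beta\epsilon\lnormhH{\psi_h}^2$ stems from the concave part $-\varphi$ of $\partial W$. The hypothesis $\Delta t<\frac{\varepsilon^2}\beta$ is precisely what renders the coefficient $\frac\epsilon{\Delta t}-\frac\beta\epsilon$ positive; then, using $\lnormH{\psi_h}^2\le\lnormhH{\psi_h}^2$ from \eqref{eq_h_norm_bounded}, both coefficients are strictly positive and $b(\psi_h,\psi_h)\ge c\,\hnorm{\psi_h}^2$ for some $c=c(\epsilon,\beta,\Delta t)>0$. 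This delivers the unique minimiser $\varphi_h^{n+1}\in K_h$ and completes the proof. The main obstacle is exactly this control of the indefinite mass term; the remainder is routine coercivity bookkeeping.
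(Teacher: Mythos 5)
Your proposal is correct, and on the pressure step it coincides with the paper's argument: \eqref{equation_phase_FEM_u} is a linear, symmetric, positive definite system on $S_h$, solvable uniquely with no condition on $\Delta t$. On the phase-field step, however, you organise things differently from the paper. The paper first proves \emph{existence for any} $\Delta t>0$ by recognising \eqref{equation_phase_FEM_v} as the Euler--Lagrange variational inequality of the functional $\frac{\epsilon\beta}{2}|\nabla z_h|_0^2+\frac{\epsilon}{2\Delta t}|I_h(z_h-\varphi_h^n)|_0^2+\frac{\beta}{2\epsilon}(1-z_h^2,1)_h-\frac{c_W}{\alpha}(u_h^{n+1},z_h)_h$ over $K_h$; this functional is in general nonconvex because of the concave term $\frac{\beta}{2\epsilon}(1-z_h^2,1)_h$, but a minimiser exists by a standard compactness/continuity argument since $K_h$ is a bounded closed convex subset of the finite-dimensional $S_h$. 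The time-step restriction enters only in a \emph{separate} uniqueness argument: testing the inequality for each of two putative solutions against the other and subtracting gives $\bigl(\frac{\epsilon^2}{\Delta t}-\beta\bigr)|\theta|_h^2+\epsilon^2\beta|\nabla\theta|_0^2\le 0$. You instead absorb the concave mass term into the bilinear form $b$ and apply the Lions--Stampacchia theorem (equivalently, strict convexity of the quadratic functional, using symmetry of $b$) once $\Delta t<\frac{\epsilon^2}{\beta}$ makes $b$ coercive via \eqref{eq_h_norm_bounded}. The underlying computation is the same --- the paper's uniqueness estimate is exactly $\epsilon\, b(\theta,\theta)\le 0$ --- but your packaging proves existence and uniqueness in one stroke at the cost of making existence appear conditional on the time-step restriction, whereas the paper's decomposition shows existence unconditionally, with only uniqueness requiring $\Delta t<\frac{\epsilon^2}{\beta}$. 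Since the lemma as stated assumes this restriction anyway, your proof is fully adequate for the claim; the paper's version carries the mildly stronger information that a solution (possibly nonunique) exists for every $\Delta t$.
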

\begin{proof}
The existence of a unique solution $u_{h}^{n+1}$ to 
\eqref{equation_phase_FEM_u} follows immediately from the fact that the 
system is linear, symmetric and positive definite.
In addition, there exists a $\varphi_h^{n+1} \in K_h $ solving 
(\ref{equation_phase_FEM_v}) since this is the Euler-Lagrange variational 
inequality of the minimisation problem:
\[
\min_{z_h \in K_h} \left\{ 
\frac{\epsilon \beta }{2} |\nabla z_h|_{0}^2 + \frac{\epsilon}{2 \Delta t} | I_h(z_h - \vphihn) | _{0}^2  
+ \frac{\beta}{2\epsilon} (1-z_h^2,1)_h -  \frac{c_W}{\alpha} \left(u_{h}^{n+1},  z_h \right)_h \right\},
\]
for which the existence of a minimiser can be shown by a standard minimisation argument. For uniqueness 
we assume there exist two solutions $\varphi_{h,1}^{n+1}$ and 
$\varphi_{h,2}^{n+1}$ and set $\theta=\varphi_{h,1}^{n+1}-\varphi_{h,2}^{n+1}$.
Taking 
$\eta_h=\varphi_{h,2}^{n+1}$ in the variational inequality 
(\ref{equation_phase_FEM_v}) for $\varphi_{h,1}^{n+1}$ and vice versa, 
then subtracting one of the resulting inequalities from the other  we obtain 
$$
 \frac{\epsilon^2}{\Delta t}|\theta|^2_h+ \epsilon^2\beta |\nabla\theta|_0^2  
- \beta |\theta|^2_h\leq 0~~\Rightarrow~~ \left(\frac{\epsilon^2}{\Delta t}-\beta\right)|\theta|^2_h+ \epsilon^2\beta |\nabla\theta|_0^2  \leq 0,
$$	
which yields uniqueness for $\Delta t<\frac{\varepsilon^2}\beta$.
\end{proof}

\setcounter{equation}{0}
\section{Convergence of the finite element scheme}
\label{sec:convergence}

In this section, which makes use of many of the techniques in \cite{digm}, 
we prove that as $h \rightarrow 0$ the solution of the finite element scheme 
$\mathds{P}_h$ converges to the solution of \ppp.
Here we fix $\epsilon$, and assume that $\Delta t \leq Ch^2$. 
All limits are taken as $h \rightarrow 0$ (and thus $\Delta t \rightarrow 0$). 
We introduce the notation
\begin{align*}
\vphih(t) & := 
\frac{t-t_n}{\Delta t} \varphi_h^{n+1} + \frac{t_{n+1}-t}{\Delta t} \vphihn, 	
\quad t \in (t_n,t_{n+1}], \\
\vphihplus(t) & := \varphi_h^{n+1} , \quad \vphihminus(t) := \vphihn, 
\quad t \in (t_n,t_{n+1}], 
\end{align*}
and similarly for $u_h(t)$, $u_h^+(t)$ and $u_h^-(t)$. 
Furthermore we use $\cdot^{(\star)}$ to denote an expression with or without
the superscript $\star$.

Rewriting \eqref{eq:fea} using the above notation gives, for $n=0,\ldots,N-1$
and for $t\in (t_n,t_{n+1}]$, 
\begin{subequations} \label{eq:convfea}
\begin{align} \label{equation_phase_FEM_ua}
&\varepsilon^2\left(\partial_t  u_{h}, \eta_h \right)_h 
+\big( \zeta(\varphi^{-}_h) \nabla {u}_h^+, \nabla \eta_h \big)
+\frac{1}{\epsilon\alpha} \big(\delta(\varphi_h^-){u}_h^+,\eta_h\big)_h 
=\frac{Q}{\epsilon} \big(\delta(\varphi_h^-),\eta_h\big)_h
-\big(\zeta(\varphi_h^-),\eta_h\big)_h, \\ 
& 
\epsilon \left( \partial_t \vphi_h, \rho_h - \varphi_h^{+} \right)_h 
+ \epsilon \beta \left(\nabla \varphi_h^{+} , \nabla \rho_h - \nabla \varphi_h^{+}\right) 
- \frac{\beta}{\epsilon} \left( \varphi_h^{+}, \rho_h - \varphi_h^{+}\right)_h 
\geq \frac{c_W}{\alpha} 
\left( u_h^+, \rho_h - \varphi_h^{+}\right)_h, 
\label{equation_phase_FEM_va}
\end{align}
\end{subequations}
for all $\eta_h \in S_h$ and $\rho_h \in K_h$.

\begin{Lemma}
\label{lemma_phase_digm_4_1_u}
We have
\begin{equation}
\label{equation_digm_4_1_sup_t_u}
\sup_{t \in [0, T]} \lnormH{ \uhl(t)}^2 \leq C , 
\end{equation}
\begin{equation}
\label{equation_digm_4_1_L2_bounds_u}
\timeint{ \left( \lnormwH{\nabla \ulinehplus}{\zeta(\vphi_h^{-})}^2 + \lnormwH { \ulinehplus }{\delta(\vphi_h^-)}^2\right)} \leq C ,
\end{equation}
and
\begin{equation}
\label{equation_digm_4_1_L2_bound_u_t}
	\timeint{ \lnormH{ \ulinehplus - \ulinehminus }^2 } \leq C \Delta t.
\end{equation}
\end{Lemma}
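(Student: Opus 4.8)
The plan is to derive a single discrete energy estimate by testing the linear equation \eqref{equation_phase_FEM_u} with $\eta_h = u_h^{n+1}$, and then to extract all three bounds from it, using a discrete Gronwall argument for the first and summation over the time levels for the other two. First I would set $\eta_h = u_h^{n+1}$ in \eqref{equation_phase_FEM_u} and treat the time-difference term with the elementary identity $(a-b,a)_h = \tfrac12(\lnormhH{a}^2 - \lnormhH{b}^2 + \lnormhH{a-b}^2)$, turning it into $\frac{\epsilon^2}{2\Delta t}(\lnormhH{u_h^{n+1}}^2 - \lnormhH{u_h^n}^2 + \lnormhH{u_h^{n+1}-u_h^n}^2)$. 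The two remaining left-hand side terms become $\lnormwH{\nabla u_h^{n+1}}{\zeta(\varphi_h^n)}^2$ and $\frac1{\epsilon\alpha}\lnormwhh{u_h^{n+1}}{\delta(\varphi_h^n)}^2$, both of which are nonnegative because $\varphi_h^n\in K_h$ forces $|\varphi_h^n|\le 1$ pointwise (being piecewise linear and bounded by $1$ at the nodes), so that $\zeta(\varphi_h^n)\ge 0$ and $\delta(\varphi_h^n)\ge 0$.

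Next I would bound the right-hand side. Using $0\le\zeta\le1$, $0\le\delta\le\frac2\pi$ and $|\Omega|<\infty$, a Cauchy--Schwarz estimate followed by Young's inequality gives $\frac{Q}\epsilon(\delta(\varphi_h^n),u_h^{n+1})_h - (\zeta(\varphi_h^n),u_h^{n+1})_h \le C + C\lnormhH{u_h^{n+1}}^2$, where a portion of the $Q$-term may, if preferred, be absorbed into the coercive $\delta$-term. The structurally important point is that the resulting unweighted $\lnormhH{u_h^{n+1}}^2$ cannot be absorbed by the degenerate coercive terms on the left, since those vanish wherever $|\varphi_h^n|=1$; it must instead be controlled by the mass term supplied by the time discretisation.

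Multiplying the estimate by $2\Delta t/\epsilon^2$ and rearranging then yields $(1-\tfrac{\Delta t}{\epsilon^2})\lnormhH{u_h^{n+1}}^2 \le \lnormhH{u_h^n}^2 + C\Delta t$, modulo the nonnegative terms kept on the left. As $\epsilon$ is fixed and $\Delta t\to0$, eventually $\Delta t\le\tfrac12\epsilon^2$, so a discrete Gronwall inequality gives $\sup_n \lnormhH{u_h^n}^2\le C$; combined with \eqref{eq_h_norm_bounded} and the fact that $u_h(t)$ is a convex combination in time of $u_h^n$ and $u_h^{n+1}$, this proves \eqref{equation_digm_4_1_sup_t_u}. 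Summing the estimate over $n=0,\dots,N-1$, the telescoping mass terms are controlled by the data and the sup bound just obtained, so the retained coercive sum gives $\Delta t\sum_n\lnormwH{\nabla u_h^{n+1}}{\zeta(\varphi_h^n)}^2\le C$; the $\delta$-weighted part of \eqref{equation_digm_4_1_L2_bounds_u} follows more directly from boundedness of $\delta$ together with \eqref{equation_digm_4_1_sup_t_u}. Finally the telescoped difference terms give $\sum_n\lnormhH{u_h^{n+1}-u_h^n}^2\le C$, whence $\timeint{\lnormH{u_h^+-u_h^-}^2} = \Delta t\sum_n\lnormH{u_h^{n+1}-u_h^n}^2 \le C\Delta t$ by \eqref{eq_h_norm_bounded}, which is \eqref{equation_digm_4_1_L2_bound_u_t}.

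I expect the main obstacle to be precisely the handling of the non-coercive source term $-(\zeta(\varphi_h^n),u_h^{n+1})_h$ together with the analogous $Q$-term: the available coercivity is only with respect to the degenerate weights $\zeta$ and $\delta$, so there is no unweighted $L^2$-control of $u_h^{n+1}$ on the left, and one is forced to lean on the artificial parabolic mass term $\frac{\epsilon^2}{\Delta t}(\cdot,\cdot)_h$ and a discrete Gronwall argument. This is also where the scaling assumption $\Delta t\lesssim\epsilon^2$, and more broadly the benefit of the added $\epsilon^2\partial_t u$ term, enters in an essential way.
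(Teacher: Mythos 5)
Your proposal is correct and follows essentially the same route as the paper: testing \eqref{equation_phase_FEM_u} with $\eta_h=u_h^{n+1}$, using the identity $2a(a-b)=a^2-b^2+(a-b)^2$, bounding the right-hand side by $C(1+\lnormhH{u_h^{n+1}}^2)$, and then summing and applying a discrete Gronwall inequality together with the norm equivalence \eqref{eq_h_norm_bounded} to obtain all three bounds. Your additional remarks --- that the absorption requires $\Delta t\lesssim\epsilon^2$ (implicit in the paper's Gronwall step) and that the $\delta$-weighted bound alternatively follows from \eqref{equation_digm_4_1_sup_t_u} --- are accurate refinements, not a different argument.
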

\begin{proof}
Setting $\eta_h = \ulinehnp$ in 
(\ref{equation_phase_FEM_u}) 
we obtain from \eqref{eq:zeta} and 
$\vphi_h^n \in K$ that
\begin{align*}
 \frac{\epsilon^2}{2} \lnormhH{\ulinehnp - \ulinehn}^2 
 &+ \frac{\epsilon^2}{2} \left( \lnormhH{\ulinehnp  }^2 - \lnormhH{\ulinehn }^2 \right)   + \Delta t \lnormwH{ \nabla \ulinehnp }{\zeta(\vphi_h^{n})}^2 
+ \frac{\Delta t}{\epsilon\alpha} 
\lnormhH{[\delta(\vphi_h^{n})]^\frac12 \ulinehnp }^2
\nonumber \\&\qquad 
= \frac{Q \Delta t}{\epsilon} \left( \delta(\vphi_h^{n}), \ulinehnp \right)_h 
- \Delta t \left( \zeta(\vphi_h^{n}), \ulinehnp  \right)_h \leq C\Delta t \lnormhH{\ulinehnp  }^2
\end{align*}
Summing over $n = 0 , \ldots, N-1$, using a discrete Gronwall inequality and recalling \eqref{eq_h_norm_bounded} gives the required results.
\end{proof}

\begin{Lemma}
\label{lemma_phase_digm_4_1_varphi}
We have that
\begin{equation}
\label{equation_digm_4_1_varphi_leq_c}
	\sup_{t \in [0, T]} \lnormH{ \nabla \varphi_h(t) }^2 + \timeint{\lnormH{\partial_t \varphi_{h}}^2} \leq C ,
\end{equation}
and
\begin{equation}
\label{equation_digm_4_1_varphi_leq_c_dt}
	\timeint{\lnormH{ \nabla (\vphihplus - \vphihminus)}^2} \leq C \Delta t .
\end{equation} 

\end{Lemma}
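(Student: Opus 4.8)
The plan is to derive a discrete energy estimate for the phase field by testing the variational inequality \eqref{equation_phase_FEM_v} with the admissible choice $\rho_h = \varphi_h^n \in K_h$. With this choice $\rho_h - \varphi_h^{n+1} = -(\varphi_h^{n+1}-\varphi_h^n)$, so the first term produces $-\frac{\epsilon}{\Delta t}\lnormhH{\varphi_h^{n+1}-\varphi_h^n}^2$, and applying the elementary identity $(a,a-b)=\tfrac12(|a|^2-|b|^2+|a-b|^2)$ in both the $(\cdot,\cdot)$ and $(\cdot,\cdot)_h$ inner products to the gradient and mass terms turns the inequality, after multiplying by $-1$, into
\begin{align*}
\frac{\epsilon}{\Delta t}\lnormhH{\varphi_h^{n+1}-\varphi_h^n}^2
&+ \frac{\epsilon\beta}{2}\left(\lnormH{\nabla\varphi_h^{n+1}}^2 - \lnormH{\nabla\varphi_h^n}^2\right)
+ \frac{\epsilon\beta}{2}\lnormH{\nabla(\varphi_h^{n+1}-\varphi_h^n)}^2 \\
&- \frac{\beta}{2\epsilon}\left(\lnormhH{\varphi_h^{n+1}}^2 - \lnormhH{\varphi_h^n}^2\right)
- \frac{\beta}{2\epsilon}\lnormhH{\varphi_h^{n+1}-\varphi_h^n}^2
\leq \frac{c_W}{\alpha}\left(u_h^{n+1},\varphi_h^{n+1}-\varphi_h^n\right)_h .
\end{align*}

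Next I would deal with the two terms carrying the ``wrong'' sign, which come from the concave, destabilising part $-\frac{\beta}{\epsilon}(\varphi,\cdot)$ of the double-obstacle potential. The telescoping contribution $-\frac{\beta}{2\epsilon}(\lnormhH{\varphi_h^{n+1}}^2 - \lnormhH{\varphi_h^n}^2)$ is harmless: since $\varphi_h^{n+1}\in K_h$ satisfies $|\varphi_h^{n+1}|\le 1$ pointwise, its summed value $\frac{\beta}{2\epsilon}(\lnormhH{\varphi_h^N}^2 - \lnormhH{\varphi_h^0}^2)$ is bounded by $C$. The genuinely dangerous term is $-\frac{\beta}{2\epsilon}\lnormhH{\varphi_h^{n+1}-\varphi_h^n}^2$, and this is where the time-step restriction enters: it is absorbed into the coercive term $\frac{\epsilon}{\Delta t}\lnormhH{\varphi_h^{n+1}-\varphi_h^n}^2$. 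The coupling to $u$ on the right-hand side I would control with Young's inequality, bounding it by $\frac{\epsilon}{2\Delta t}\lnormhH{\varphi_h^{n+1}-\varphi_h^n}^2 + \frac{c_W^2\Delta t}{2\alpha^2\epsilon}\lnormhH{u_h^{n+1}}^2$, sending the first piece back to the left-hand side and keeping the second for summation. After both absorptions the net coefficient of $\lnormhH{\varphi_h^{n+1}-\varphi_h^n}^2$ is $\frac{\epsilon}{2\Delta t}-\frac{\beta}{2\epsilon}$, which is positive for $\Delta t<\epsilon^2/\beta$ and bounded below by $\frac{\epsilon}{4\Delta t}$ once $\Delta t\le\epsilon^2/(2\beta)$; both hold for $h$ small, as $\Delta t\le Ch^2$.

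Finally I would sum over $n=0,\ldots,m-1$ for each $m\le N$. The gradient term telescopes to $\frac{\epsilon\beta}{2}\lnormH{\nabla\varphi_h^m}^2$, the troublesome mass and coupling contributions are bounded as above, and the summed $u$-term equals a constant times $\timeint{\lnormhH{\ulinehplus}^2}$, which is finite by \eqref{equation_digm_4_1_sup_t_u} together with \eqref{eq_h_norm_bounded}. Taking the maximum over $m$, and using that $\varphi_h(t)$ is a convex combination of $\varphi_h^n$ and $\varphi_h^{n+1}$ (so $\lnormH{\nabla\varphi_h(t)}\le\max\{\lnormH{\nabla\varphi_h^n},\lnormH{\nabla\varphi_h^{n+1}}\}$), yields $\sup_{t}\lnormH{\nabla\varphi_h(t)}^2\le C$. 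The remaining coercive terms give, after rewriting $\varphi_h^{n+1}-\varphi_h^n=\Delta t\,\partial_t\varphi_h$ and $\varphi_h^{n+1}-\varphi_h^n=\vphihplus-\vphihminus$ on $(t_n,t_{n+1}]$ and converting $\lnormhH{\cdot}$ to $\lnormH{\cdot}$ via \eqref{eq_h_norm_bounded}, exactly the bounds \eqref{equation_digm_4_1_varphi_leq_c} and \eqref{equation_digm_4_1_varphi_leq_c_dt}. The main obstacle is the concave potential term: without the choice $\rho_h=\varphi_h^n$ producing a clean $\lnormhH{\varphi_h^{n+1}-\varphi_h^n}^2$, and without the time-step restriction to absorb it into the discrete time derivative, coercivity of the estimate would be lost.
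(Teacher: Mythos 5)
Your proof is correct and follows essentially the same route as the paper: testing \eqref{equation_phase_FEM_v} with $\rho_h=\varphi_h^n\in K_h$, expanding via the identity $2r(r-s)=r^2-s^2+(r-s)^2$ in both inner products, applying Young's inequality to the $u$-coupling, summing over $n$ and invoking $\varphi_h^n\in K_h$ together with \eqref{equation_digm_4_1_sup_t_u} and \eqref{eq_h_norm_bounded}. If anything, your explicit absorption of the concave term $-\frac{\beta}{2\epsilon}\lnormhH{\varphi_h^{n+1}-\varphi_h^n}^2$ into the time-derivative term under $\Delta t<\epsilon^2/\beta$ (guaranteed by Lemma~\ref{lem_exist} and $\Delta t\leq Ch^2$) is more careful than the paper's displayed computation, which prints these potential terms with the opposite sign and discards the squared increment without comment.
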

\begin{proof}
Choosing $\rho_h = \vphihn \in K_h$ in (\ref{equation_phase_FEM_v}), 
re-arranging and using the elementary identity 
\begin{equation*}
 2r(r-s) = (r^2-s^2)+(r-s)^2 \quad \forall\ r,s \in \bR,
\end{equation*}
gives
\begin{align}
\Delta t \lnormhH{\derivinth{\varphi}}^2 &+ \frac{\beta}{2} \left( \lnormH{\nabla \varphi_h^{n+1} }^2 - \lnormH{\nabla \vphihn }^2 \right) 
+ \frac{\beta}{2} \lnormH{\nabla (\varphi_h^{n+1} - \vphihn)}^2 \nonumber \\
& + \frac{\beta}{2 \epsilon^2} \left(  \lnormhH{\varphi_h^{n+1}}^2 - \lnormhH{\varphi_h^{n}}^2 \right) 
+ \frac{\beta}{2 \epsilon^2} \lnormhH{\varphi_h^{n+1} - \vphihn}^2  
\leq  \frac{c_W}{ \epsilon \alpha} \left( u_{h}^{n}, \varphi_h^{n+1}-\varphi_h^n\right)_h . \nonumber
\end{align}
Applying Young's inequality to the right hand side yields
\begin{align*}
\frac{\Delta t}2 \lnormhH{\derivinth{\varphi}}^2 &+ \frac{\beta}{2} \left( \lnormH{\nabla \varphi_h^{n+1} }^2 - \lnormH{\nabla \vphihn }^2 \right)  
+ \frac{\beta}{2} \lnormH{\nabla (\varphi_h^{n+1} - \vphihn)}^2 \\
&\leq \frac{\Delta t}2 (\frac{c_W}{\epsilon \alpha})^2 \lnormhH{u_{h}^{n}}^2 
\,+\,\frac{\beta}{2 \epsilon^2} \left(  \lnormhH{\vphi_h^{n+1}}^2 - \lnormhH{\varphi_h^{n}}^2 \right). 
\end{align*}
Summing over $n = 0, \ldots, N-1$ and noting (\ref{eq_h_norm_bounded}), 
$\vphihn \in K_h$ and (\ref{equation_digm_4_1_sup_t_u}) 
yields (\ref{equation_digm_4_1_varphi_leq_c}) and 
(\ref{equation_digm_4_1_varphi_leq_c_dt}).
\end{proof}

\begin{Lemma}
\label{lemma_phase_digm_4_2_u}
We have
\[
	\timeint{\|\uhtl \|^2_{(H^1(\Omega))'}} \leq C .
\]
\end{Lemma}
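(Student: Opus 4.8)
The plan is to estimate $\|\uhtl(t)\|_{(H^1(\Omega))'}$ pointwise in $t$ by duality and then integrate in time. By definition,
\[
\|\uhtl(t)\|_{(H^1(\Omega))'} = \sup_{0\neq\eta\in H^1(\Omega)} \frac{(\uhtl,\eta)}{\|\eta\|_1},
\]
so it suffices to bound $(\uhtl,\eta)$ for an arbitrary $\eta\in H^1(\Omega)$. The device I would use is the lumped-mass $L^2$-projection $\widehat P_h:L^2(\Omega)\to S_h$, defined by $(\widehat P_h\eta,\chi_h)_h=(\eta,\chi_h)$ for all $\chi_h\in S_h$. Since the lumped mass matrix is diagonal and positive definite, this is well defined for $\eta\in L^2(\Omega)$, and, crucially, because $\uhtl\in S_h$ we have the \emph{exact} identity $(\uhtl,\widehat P_h\eta)_h=(\uhtl,\eta)$. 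On the quasi-uniform family of meshes from assumption (A), $\widehat P_h$ is stable in $H^1$, i.e.\ $\lnormH{\nabla\widehat P_h\eta}+\lnormH{\widehat P_h\eta}\leq C\|\eta\|_1$.

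With this choice, I would take $\eta_h=\widehat P_h\eta$ in \eqref{equation_phase_FEM_ua}, so that the lumped time-derivative term becomes exactly $\varepsilon^2(\uhtl,\eta)$, and move the remaining terms to the right-hand side. Each is then bounded using $0\le\zeta\le1$ and $0\le\delta\le\tfrac2\pi$ (valid since $|\vphihminus|\le1$), the equivalence \eqref{eq_h_norm_bounded}, and the stability of $\widehat P_h$: the diffusion term by $\lnormwH{\nabla\ulinehplus}{\zeta(\vphihminus)}\,\lnormH{\nabla\widehat P_h\eta}$, the reaction term by the (lumped) weighted norm $\lnormwH{\ulinehplus}{\delta(\vphihminus)}\,\lnormH{\widehat P_h\eta}$ via a discrete weighted Cauchy--Schwarz inequality, and the two source terms by $C\lnormH{\widehat P_h\eta}$. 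Combining these with the stability estimate yields the pointwise-in-time bound
\[
\|\uhtl(t)\|_{(H^1(\Omega))'}\leq C\Big(\lnormwH{\nabla\ulinehplus}{\zeta(\vphihminus)}+\lnormwH{\ulinehplus}{\delta(\vphihminus)}+1\Big).
\]
Finally, squaring and integrating over $[0,T]$, the first two terms are controlled by \eqref{equation_digm_4_1_L2_bounds_u} and the constant term by $T<\infty$, which gives the claim.

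The one genuinely nontrivial ingredient is the $H^1$-stability of $\widehat P_h$ on quasi-uniform meshes; this is what allows the diffusion term to be bounded by $\|\eta\|_1$ rather than incurring an inverse-estimate factor $h^{-1}$. Using $\widehat P_h$ (rather than the nodal interpolant or the standard $L^2$-projection) is precisely what removes both the interpolation error and the mass-lumping quadrature error, and thereby avoids any appearance of $\lnormH{\uhtl}$, which from \eqref{equation_digm_4_1_L2_bound_u_t} is only controllable by $C(\Delta t)^{-1/2}$ in $L^2(0,T)$ and would otherwise obstruct the estimate. A secondary technical point is to match the lumped weighted norm arising in the reaction term with the form appearing in \eqref{equation_digm_4_1_L2_bounds_u}, which follows from standard norm equivalences on $S_h$.
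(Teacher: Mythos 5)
Your proposal is correct and is essentially the paper's own proof: your lumped-mass projection $\widehat P_h$, defined by $(\widehat P_h\eta,\chi_h)_h=(\eta,\chi_h)$, is exactly the operator $J_h$ the paper uses (with the same $H^1$-stability, cited there to Barrett--Blowey--Garcke), and you test \eqref{equation_phase_FEM_ua} with it and integrate the resulting bound using \eqref{equation_digm_4_1_L2_bounds_u} just as the paper does. The only cosmetic difference is that you bound the reaction term by the weighted norm $\lnormwH{\ulinehplus}{\delta(\vphihminus)}$ and invoke \eqref{equation_digm_4_1_L2_bounds_u}, whereas the paper uses $\|\delta(\vphihminus)\|_{0,\infty}\leq C$ together with the sup-in-time bound \eqref{equation_digm_4_1_sup_t_u}; both are valid.
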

\begin{proof}
Let $\psi \in H^1({\Omega})$ be arbitrary, and let $J_h \psi \in S_h$ 
be its $L^2$-projection, defined via
\[
(\psi,v_h) = (J_h \psi, v_h)_h \quad \forall v_h \in S_h.
\]
We have $(v_h, \psi)_{((H^1)', H^1)} = (v_h, \psi) = (v_h, J_h\psi)_h$
for $v_h \in S_h$, and and so setting $\eta_h = J_h \psi$ in 
(\ref{equation_phase_FEM_u}) yields
\begin{align*}
\frac{\epsilon^2}{\Delta t} ( u_h^{n+1} - u_h^n, \psi)_{((H^1)', H^1 )} =
&- (\zeta(\vphi_h^{n}) \nabla \ulinehnp, \nabla (J_h \psi))
- \frac{1}{\epsilon\alpha} \left( \delta(\vphi_h^{n}) \ulinehnp, J_h \psi 
\right)_h \\ &
+ \frac{Q}{\epsilon} \left( \delta(\vphi_h^{n}), J_h \psi \right)_h
- \left( \zeta(\vphi_h^{n}), J_h\psi \right)_h . 
\end{align*}

It can be shown that $\hnorm{J_h \psi} \leq C \hnorm{\psi} $, for all $\psi \in H^1(\Omega)$ (see, for example, \cite{digm_ref_1_bar_blo_gar}). Using this fact, together with (\ref{equation_digm_4_1_sup_t_u}) and the bounds on the $L^\infty$ norms of $\delta(\vphi_h^{n})$ and $\zeta(\vphi_h^{n})$, we have the following for all $\psi \in H^1({\Omega})$ 
\begin{align*}
\frac{\epsilon^2}{\Delta t} ( u_h^{n+1} - u_h^n, \psi)_{((H^1)', H^1 )} 
& \leq C 
\lnormwH{\nabla u_h^{n+1} }{\zeta(\vphi_h^{n})} \lnormH{\nabla (J_h \psi)}
+ C \lnormH{J_h \psi} \nonumber \\
& \leq C( 1 +  
\lnormwH{\nabla u_h^{n+1} }{\zeta(\vphi_h^{n})} ) \hnorm{ \psi}.
\end{align*}
We conclude that
\[
	\| \frac{u_h^{n+1} - u_h^n}{\Delta t} \|_{(H^1(\Omega))'} \leq 
C( 1 +  \lnormwH{\nabla u_h^{n+1} }{\zeta(\vphi_h^{n})} ) .
\]
Squaring, multiplying by $\Delta t$, and summing from $n = 0, \ldots , N-1$ 
yields the required result, on noting \eqref{equation_digm_4_1_L2_bounds_u}.
\end{proof}

{From} Lemmas~\ref{lemma_phase_digm_4_1_varphi} and 
\ref{lemma_phase_digm_4_2_u} and the Aubin--Lions lemma,  
we have that there exist 
functions $\varphi \in L^\infty(0,T;H^1(\Omega)) \cap
H^1(0,T; (H^1(\Omega))')$, $u \in L^\infty(\Omega \times(0,T)) \cap
H^1(0,T; (H^1(\Omega))')$
and $F \in L^2(0,T; [L^2(\Omega)]^2)$
such that, after possibly re-indexing from 
subsequences, it holds as $h \rightarrow 0$ that
\begin{subequations} \label{eq:conv}
\begin{align}
	\varphi_{h}^{(\pm)} \overset{*}{\rightharpoonup} \varphi \quad & \mbox{ in }  L^\infty ( 0, T;  H^1(\Omega) )  , 
\label{eq_digm_all_convergences_1} \\
	 \partial_t \varphi_{h} \rightharpoonup \partial_t\varphi \quad & \mbox{ in }  L^2 ( 0, T; L^2(\Omega) ) ,
\label{eq_digm_all_convergences_2} \\
	\varphi_{h}^{(\pm)} \rightarrow \varphi \quad & \mbox{ in }  L^2 ( 0, T; L^2(\Omega) ) , 
\label{eq_digm_all_convergences_3} \\
\uhl^{(\pm)} \rightharpoonup \ul \quad & \mbox{ in }  L^\infty (0,T;L^2( \Omega) ) ,
\label{eq_digm_all_convergences_4} \\
	\partial_t u_{h} \rightharpoonup \partial_t u \quad & \mbox{ in }  L^2 ( 0, T; (H^1(\Omega))' ) ,
\label{eq:uht} \\
	\zeta(\vphi_h^{-}) \nabla u_h^+ \rightharpoonup F \quad & \mbox{ in } L^2(0,T; [L^2(\Omega)]^2) .
\label{digm_4_3_conv_2}
\end{align}
\end{subequations}
Here (\ref{digm_4_3_conv_2}) follows directly from 
(\ref{equation_digm_4_1_L2_bounds_u}) and the bound on the $L^\infty$ norm of 
$\zeta(\vphi_h^{-})$. The function $F$ will be identified later, 
see Lemma~\ref{lemma_phase_digm_4_4_zeta}.

\begin{Theorem}
\label{thm_digm_theorem1_varphi}
The functions $\varphi$ and $u$ in \eqref{eq:conv} satisfy 
\eqref{equation_phase_weak_v}.
\end{Theorem}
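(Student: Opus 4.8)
The plan is to pass to the limit in the discrete variational inequality \eqref{equation_phase_FEM_va}, integrated over an arbitrary subinterval $(s_1,s_2)\subseteq(0,T)$, and then to localise in time. For a fixed test function $\rho$ I would use the discrete test function $\rho_h:=I_h\rho\in K_h$; since $I_h$ only needs nodal values, I first prove the inequality for $\rho$ ranging over a fixed countable set $\mathcal D\subset K$ of continuous functions with $|\rho|\le 1$ that is dense in $K$ with respect to $\hnorm{\cdot}$. For such $\rho$, linear interpolation of nodal values in $[-1,1]$ keeps $I_h\rho\in[-1,1]$, so $\rho_h\in K_h$, and $\rho_h\to\rho$ in $H^1(\Omega)$. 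The statement for general $\rho\in K$ then follows by density, each term of the limiting inequality being continuous in $\rho$ with respect to the $H^1$-norm; note also that $\varphi(t)\in K$ for a.e.\ $t$ passes from $\vphihplus\in K_h$ via \eqref{eq_digm_all_convergences_3}, so that \eqref{equation_phase_weak_v} is meaningful.

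Taking $\rho_h=I_h\rho$ in \eqref{equation_phase_FEM_va}, integrating over $(s_1,s_2)$ and keeping only the term $\epsilon\beta(\nabla\vphihplus,\nabla\vphihplus)$ on the right-hand side, I would obtain
\[
L_h := \epsilon\int_{s_1}^{s_2}\left(\partial_t\varphi_h,\rho_h-\vphihplus\right)_h{\rm d}t + \epsilon\beta\int_{s_1}^{s_2}\left(\nabla\vphihplus,\nabla\rho_h\right){\rm d}t - \frac{\beta}{\epsilon}\int_{s_1}^{s_2}\left(\vphihplus,\rho_h-\vphihplus\right)_h{\rm d}t - \frac{c_W}{\alpha}\int_{s_1}^{s_2}\left(\ulinehplus,\rho_h-\vphihplus\right)_h{\rm d}t ,
\]
which satisfies
\[
L_h \;\ge\; \epsilon\beta\int_{s_1}^{s_2}\lnormH{\nabla\vphihplus}^2\,{\rm d}t .
\]
(If $s_1,s_2$ are not grid points this introduces an $O(\Delta t)$ error that vanishes in the limit.)

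Next I would pass to the limit in $L_h$. Every term there converges to its natural continuous counterpart by a weak--strong argument: the time-derivative term uses \eqref{eq_digm_all_convergences_2} against the strong convergence $\rho_h-\vphihplus\to\rho-\varphi$ in $L^2(0,T;L^2(\Omega))$ coming from \eqref{eq_digm_all_convergences_3}; the cross gradient term uses the weak $L^2$-convergence of $\nabla\vphihplus$ from \eqref{eq_digm_all_convergences_1} against $\nabla\rho_h\to\nabla\rho$; the zeroth-order term, including the quadratic piece $\frac{\beta}{\epsilon}(\vphihplus,\vphihplus)_h$, converges by the strong convergence \eqref{eq_digm_all_convergences_3}; and the coupling term uses \eqref{eq_digm_all_convergences_4} against the strong convergence of $\rho_h-\vphihplus$. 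In each case the discrepancy between $(\cdot,\cdot)_h$ and $(\cdot,\cdot)$ is controlled by the standard quadrature estimate, which together with \eqref{eq_h_norm_bounded} and the uniform bounds of \Lemmaref{lemma_phase_digm_4_1_varphi} tends to zero. Hence $L_h\to L$, the expression obtained by replacing $\vphihplus,\ulinehplus$ by $\varphi,u$ and $(\cdot,\cdot)_h$ by $(\cdot,\cdot)$.

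The one term resisting this treatment, and the crux of the proof, is the right-hand side $\epsilon\beta\int_{s_1}^{s_2}\lnormH{\nabla\vphihplus}^2\,{\rm d}t$, for which only weak convergence of $\nabla\vphihplus$ is available. Here I would take the $\limsup$ of the inequality $L_h\ge\epsilon\beta\int_{s_1}^{s_2}\lnormH{\nabla\vphihplus}^2\,{\rm d}t$ and invoke the weak lower semicontinuity of $v\mapsto\int_{s_1}^{s_2}\lnormH{\nabla v}^2\,{\rm d}t$, giving $\liminf\int_{s_1}^{s_2}\lnormH{\nabla\vphihplus}^2\,{\rm d}t\ge\int_{s_1}^{s_2}\lnormH{\nabla\varphi}^2\,{\rm d}t$. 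Because this term appears on the dominated side, lower semicontinuity preserves the direction of the inequality, and one arrives at $L\ge\epsilon\beta\int_{s_1}^{s_2}\lnormH{\nabla\varphi}^2\,{\rm d}t$, which is exactly \eqref{equation_phase_weak_v} integrated over $(s_1,s_2)$. Since $(s_1,s_2)\subseteq(0,T)$ is arbitrary, the Lebesgue differentiation theorem yields the pointwise inequality for a.e.\ $t\in(0,T)$ and every $\rho\in\mathcal D$; density of $\mathcal D$ in $K$ then gives \eqref{equation_phase_weak_v} for all $\rho\in K$.
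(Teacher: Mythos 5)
Your proposal is correct and follows essentially the same route as the paper: test the discrete inequality \eqref{equation_phase_FEM_va} with a recovery sequence $\rho_h\to\rho$ in $H^1(\Omega)$, pass to the limit term by term by pairing the weak convergences \eqref{eq_digm_all_convergences_1}--\eqref{eq_digm_all_convergences_4} with the strong convergence of $\rho_h-\vphihplus$, control the mass-lumping discrepancies with the quadrature estimate \eqref{numerical_int}, keep the quadratic gradient term on the dominated side and treat it by weak lower semicontinuity, then localise in time. The differences are cosmetic: the paper localises by testing with arbitrary $\psi\in C_0^\infty(0,T)$, $\psi\ge0$, rather than with subintervals plus Lebesgue differentiation, and it simply asserts the existence of the recovery sequence $\rho_h\in K_h$, whereas you construct it --- indeed more carefully, since your countable family $\mathcal{D}$ also makes the exceptional null set in $t$ independent of $\rho$, a point the paper glosses over. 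One small repair is needed in your construction: for merely \emph{continuous} $\rho\in K$ the nodal interpolant $I_h\rho$ need not converge to $\rho$ in $H^1(\Omega)$ (continuous $H^1$ functions with logarithmic spikes at nodes give counterexamples to $H^1$-stability of $I_h$), so you should take $\mathcal{D}$ inside, say, $K\cap C^\infty(\overline\Omega)$, which is dense in $K$ with respect to $\hnorm{\cdot}$: clip an $H^1$-extension of $\rho$ to $[-1,1]$ and mollify, noting that convolution with a nonnegative unit-mass kernel preserves the obstacle bound. For such smooth $\rho$ the standard interpolation estimate gives $I_h\rho\to\rho$ in $H^1(\Omega)$, and since $I_h\rho$ is piecewise linear with nodal values in $[-1,1]$ it lies in $K_h$, after which your argument goes through unchanged.
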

\begin{proof}
Using (\ref{eq_digm_all_convergences_1})--(\ref{eq_digm_all_convergences_4}) we show that (\ref{equation_phase_FEM_v}) converges to (\ref{equation_phase_weak_v}). 
Starting with (\ref{equation_phase_FEM_v}), we multiply by an arbitrary $\psi \in C^\infty_0 (0, T), \psi \geq 0$, 
and integrate over $t \in (0, T)$, to obtain
\begin{align*}
& \underbrace{ \frac{\epsilon}{\Delta t} \timeint{\psi \left( \varphi_h^{+} - \varphi_h^{-} , \rho_h - \varphi_h^{+} \right)_h }}_{(1)}
	+ \underbrace{ \epsilon\beta\timeint{\psi  \left(\nabla \varphi_h^{+} , \nabla (\rho_h - \varphi_h^{+}) \right) }}_{(2)} \nonumber \\
	&- \underbrace{ \frac{\beta}{\epsilon} \timeint{\psi  \left( \varphi_h^{+} ,\rho_h  - \varphi_h^{+} \right)_h }}_{(3)}
	-  
	\underbrace{ \frac{c_W }{ \alpha} \timeint{\psi  \left( u_h^+ ,\rho_h - \varphi_h^{+} \right)_h }}_{(4)}
	\geq  0. 
\end{align*}
Since $\rho \in K$, there exists a sequence $\rho_h \in K_h$ such that $\rho_h \rightarrow \rho$ in $H^1(\Omega)$ as $h \rightarrow 0$.

For all but the second integral we use the well known inequality,   
\begin{equation}\label{numerical_int}
      |(\eta_h,\chi)-(\eta_h,\chi)_h|  \leq Ch |\eta_h|_{1} |\chi |_{0}~~~\forall~\eta_h,\chi\in S_h.
\end{equation}
For  $(1)$ we note that $\partial_t \vphi_{h} = \frac{\vphi_h^{+} - \vphi_h^-}{\Delta t}$ on $(t_{n+1}, t_{n})$, and we can thus apply (\ref{eq_digm_all_convergences_2}) and (\ref{eq_digm_all_convergences_3}). For  $(2)$ we use (\ref{eq_digm_all_convergences_1}), (\ref{eq_digm_all_convergences_3}), and the weak lower semi-continuity of the $L^2$ norm (which gives that $- | \nabla \vphi |^2_2 \geq - \liminf_{h \rightarrow 0} | \nabla \vphi_h^+ |^2_2$). 
For  $(3)$ we use (\ref{eq_digm_all_convergences_3}). Finally, for  $(4)$, we use (\ref{eq_digm_all_convergences_3}) and (\ref{eq_digm_all_convergences_4}). This yields
\begin{align*}
\epsilon  \timeint{\psi \left( \partial_t \varphi, \rho - \varphi \right) }
	&+  \epsilon\beta \timeint{ \psi \left( \nabla \varphi , \nabla (\rho - \varphi) \right) } 
	- \frac{\beta}{\epsilon} \timeint{\psi \left( \varphi ,\rho - \varphi \right) }
	  \nonumber\\ 
	  &\qquad\qquad-\frac{c_W}{\alpha} \timeint{\psi \left( \ul ,\rho - \varphi\right) }
	  \geq  0  . \nonumber
\end{align*}
As $\psi \geq 0$ is arbitrary, this gives us the result in the limit as $h \rightarrow 0$.
\end{proof}

\begin{Lemma}
\label{lemma_phase_digm_strong_convergence_weights}
We have
\begin{subequations}
\begin{align}
\label{equation_phase_digm_convergence_of_weights_zeta}
\zeta(\vphi_h^{(\pm)}) \rightarrow \zeta(\vphi) \quad &\mbox{ in } L^2(0,T; L^2(\Omega)),\\
\label{equation_phase_digm_convergence_of_weights_delta}
\delta(\vphi_h^{(\pm)}) \rightarrow \delta(\vphi) \quad &\mbox{ in } L^2(0,T; L^2(\Omega)),\\
\label{equation}
\delta(\vphi_h^{(-)})u_h^{(+)}\rightharpoonup\delta(\vphi)u \quad &
\mbox{ in } L^2(0,T; L^2(\Omega)).
\end{align}
\end{subequations}
\end{Lemma}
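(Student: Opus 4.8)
The plan is to establish the two strong convergences directly from the (local) Lipschitz continuity of $\zeta$ and $\delta$ on $[-1,1]$ together with \eqref{eq_digm_all_convergences_3}, and then to obtain \eqref{equation} as the weak limit of a product of a strongly and a weakly convergent factor. As a preliminary I would note that $\varphi_h^{(\pm)}\in K_h\subset K$ gives $|\varphi_h^{(\pm)}|\le 1$ a.e., and passing to an a.e.\ limit along a subsequence in \eqref{eq_digm_all_convergences_3} yields $|\varphi|\le 1$ a.e.; consequently $0\le\zeta(\varphi)\le 1$ and $0\le\delta(\varphi)\le\tfrac2\pi$ a.e., so that all of $\zeta(\varphi_h^{(\pm)}),\delta(\varphi_h^{(\pm)}),\zeta(\varphi),\delta(\varphi)$ lie in $L^\infty(\Omega\times(0,T))$.

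For \eqref{equation_phase_digm_convergence_of_weights_zeta} I would use that $\zeta$ is affine, so that $\zeta(\varphi_h^{(\pm)})-\zeta(\varphi)=\tfrac12(\varphi_h^{(\pm)}-\varphi)$ and the claim is immediate from \eqref{eq_digm_all_convergences_3}. For \eqref{equation_phase_digm_convergence_of_weights_delta} I would factorise $\delta(\varphi_h^{(\pm)})-\delta(\varphi)=-\tfrac2\pi(\varphi_h^{(\pm)}-\varphi)(\varphi_h^{(\pm)}+\varphi)$ and use $|\varphi_h^{(\pm)}+\varphi|\le 2$ to get the pointwise bound $|\delta(\varphi_h^{(\pm)})-\delta(\varphi)|\le\tfrac4\pi|\varphi_h^{(\pm)}-\varphi|$, so that the $L^2(0,T;L^2(\Omega))$ convergence again follows from \eqref{eq_digm_all_convergences_3}.

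For \eqref{equation} I would first record that the sequence is bounded in $L^2(0,T;L^2(\Omega))$: since $|\delta(\varphi_h^-)|\le\tfrac2\pi$, the bound \eqref{eq_digm_all_convergences_4} gives $\|\delta(\varphi_h^-)u_h^+\|_{L^2(0,T;L^2(\Omega))}\le\tfrac2\pi\|u_h^+\|_{L^2(0,T;L^2(\Omega))}\le C$, so a weak limit exists and it only remains to identify it. Testing against $\phi$ and splitting gives
\[
\delta(\varphi_h^-)u_h^+ - \delta(\varphi)u = (\delta(\varphi_h^-) - \delta(\varphi))u_h^+ + \delta(\varphi)(u_h^+ - u).
\]
The second term is handled by \eqref{eq_digm_all_convergences_4}: because $\delta(\varphi)\in L^\infty(\Omega\times(0,T))$, the function $\delta(\varphi)\phi$ is an admissible test function and $\timeint{(u_h^+-u,\delta(\varphi)\phi)}\to 0$. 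For the first term I would restrict to $\phi\in L^\infty(\Omega\times(0,T))$ and estimate, by Cauchy--Schwarz in space and then in time,
\[
\left|\timeint{((\delta(\varphi_h^-)-\delta(\varphi))u_h^+,\phi)}\right|\le\|\phi\|_{L^\infty}\,\|\delta(\varphi_h^-)-\delta(\varphi)\|_{L^2(0,T;L^2(\Omega))}\,\|u_h^+\|_{L^2(0,T;L^2(\Omega))},
\]
which tends to $0$ by \eqref{equation_phase_digm_convergence_of_weights_delta} and the uniform bound on $u_h^+$. Since $\delta(\varphi_h^-)u_h^+$ is bounded in $L^2(0,T;L^2(\Omega))$ and $L^\infty(\Omega\times(0,T))$ is dense in $L^2(0,T;L^2(\Omega))$, a standard density argument then extends the convergence to all $\phi\in L^2(0,T;L^2(\Omega))$, giving \eqref{equation}.

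I expect the main obstacle to be \eqref{equation}: the product of two merely $L^2$-convergent factors lies only in $L^1$, so one cannot pair the first term directly against an $L^2$ test function. The structural fact that resolves this is the boundedness of $\delta$ on $[-1,1]$, forced by the obstacle constraint $|\varphi_h^-|\le 1$; this both supplies the uniform $L^2(0,T;L^2(\Omega))$ bound guaranteeing a weak limit and lets the Cauchy--Schwarz estimate close once $\phi$ is taken in $L^\infty$, with density removing that restriction afterwards.
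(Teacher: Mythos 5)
Your proposal is correct and takes essentially the same route as the paper: there, \eqref{equation_phase_digm_convergence_of_weights_zeta} and \eqref{equation_phase_digm_convergence_of_weights_delta} are said to follow from $\varphi_h \in K$ together with \eqref{eq_digm_all_convergences_3} (precisely the Lipschitz/affine estimates you write out, which rely on the obstacle bound $|\varphi_h^{(\pm)}|\leq 1$), and \eqref{equation} is obtained by combining \eqref{eq_digm_all_convergences_4} with \eqref{equation_phase_digm_convergence_of_weights_delta}, which is exactly your strong-times-weak decomposition. You have simply filled in the details (the pointwise bound $|\delta(\varphi_h^{(\pm)})-\delta(\varphi)|\leq \tfrac{4}{\pi}|\varphi_h^{(\pm)}-\varphi|$, and the Cauchy--Schwarz plus density argument) that the paper's two-sentence proof leaves implicit.
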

\begin{proof}
The statements (\ref{equation_phase_digm_convergence_of_weights_zeta}) 
and \eqref{equation_phase_digm_convergence_of_weights_delta} 
follow trivially from $\varphi_h \in K$ and (\ref{eq_digm_all_convergences_3}).  
Then (\ref{equation}) follows by combining (\ref{eq_digm_all_convergences_4}) and (\ref{equation_phase_digm_convergence_of_weights_delta}).
\end{proof}
The following lemma provides us the necessary $L^\infty(0,T;L^\infty(\Omega))$ bound on $u_h^{n+1}$ that we require to prove the convergence results  
in the subsequent lemma, Lemma \ref{lemma_phase_digm_4_3_deltazeta}. 

\begin{Lemma}
\label{lemma_phase_max_on_u_h}
Let $(u_h^n,\varphi_h^n) \in S_h\times K_h$ and let $u_h^{n+1}\in S_h$ be 
the unique solution to \eqref{equation_phase_FEM_u}. Then it holds that
\[
- | u_h^0 |_{0,\infty} - \epsilon^{-2}T \leq u_h^{n+1}
\leq \max \left( \alpha Q, | u_h^0 |_{0,\infty} \right) 
\quad \text{in } \overline\Omega.
\]
\end{Lemma}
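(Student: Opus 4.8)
The plan is to prove a discrete \emph{nodal} maximum principle, exploiting the acuteness of the mesh in assumption (A) together with the mass lumping built into $(\cdot,\cdot)_h$, and then to close the bounds by induction on $n$. First I would rewrite \eqref{equation_phase_FEM_u} in nodal form by taking $\eta_h=\chi_i$. Because the two zeroth-order terms use the lumped product, they collapse to diagonal contributions carrying the positive weight $m_i:=(\chi_i,1)>0$, whereas the stiffness term produces $\sum_j A_{ij}\,u_h^{n+1}(p_j)$ with $A_{ij}:=(\zeta(\vphi_h^n)\nabla\chi_i,\nabla\chi_j)$. Writing $u_i:=u_h^{n+1}(p_i)$, $\delta_i:=\delta(\vphi_h^n(p_i))$ and $\zeta_i:=\zeta(\vphi_h^n(p_i))$, the nodal identity reads
\begin{equation*}
\frac{\epsilon^2}{\Delta t}\,m_i\bigl(u_i-u_h^n(p_i)\bigr)+\sum_j A_{ij}u_j+\frac{1}{\epsilon\alpha}\,m_i\delta_i u_i=\frac{Q}{\epsilon}\,m_i\delta_i-m_i\zeta_i .
\end{equation*}

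The crucial step is the M--matrix structure of $(A_{ij})$. Since $\vphi_h^n\in K_h$ gives $|\vphi_h^n|\le 1$ on $\overline\Omega$, the weight satisfies $\zeta(\vphi_h^n)\ge 0$ pointwise, and $\delta_i,\zeta_i\ge 0$ at every node. On each simplex $\nabla\chi_i\cdot\nabla\chi_j$ is a nonpositive constant for $i\ne j$ by acuteness, so $A_{ij}\le 0$ for $i\ne j$; moreover $\sum_j A_{ij}=(\zeta(\vphi_h^n)\nabla\chi_i,\nabla 1)=0$. Evaluating the identity at an index $i_0$ at which $u_h^{n+1}$ attains its maximum $M$ over $\overline\Omega$ (attained at a node, as $u_h^{n+1}$ is piecewise linear), the row-sum and sign properties give $\sum_j A_{i_0j}u_j=\sum_{j\ne i_0}A_{i_0j}(u_j-M)\ge 0$. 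Dropping this term and using $\zeta_{i_0}\ge 0$ leaves $\tfrac{\epsilon^2}{\Delta t}(M-u_h^n(p_{i_0}))\le\tfrac{\delta_{i_0}}{\epsilon}(Q-M/\alpha)$. A short case distinction (whether $M>u_h^n(p_{i_0})$, and whether $\delta_{i_0}$ vanishes) then forces $M\le\max(\alpha Q,\max_{\overline\Omega}u_h^n)$, and induction in $n$ propagates the fixed constant $\max(\alpha Q,|u_h^0|_{0,\infty})$.

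For the lower bound I would argue symmetrically at a minimising node $k_0$, where the same sign properties yield $\sum_j A_{k_0j}u_j\le 0$. If $\mu:=\min_{\overline\Omega}u_h^{n+1}\ge 0$ the claimed bound is immediate; if $\mu<0$ then $Q-\mu/\alpha>0$ (here I use $Q>0$), so the $\delta$--term has a favourable sign, and bounding $\zeta_{k_0}\le 1$ gives $\mu\ge\min_{\overline\Omega}u_h^n-\Delta t/\epsilon^2$. Closing by induction, the lower bound loses at most $\Delta t/\epsilon^2$ per step, so after at most $N$ steps the cumulative loss is $N\Delta t/\epsilon^2=T/\epsilon^2$, yielding $-|u_h^0|_{0,\infty}-\epsilon^{-2}T$. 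The main obstacle is establishing the discrete-maximum-principle (M--matrix) structure under the degenerate, merely nonnegative weight $\zeta(\vphi_h^n)$: this is precisely where acuteness and mass lumping are indispensable, and one must verify that the sign argument survives at the nodes where $\zeta_i$ or $\delta_i$ degenerate to zero.
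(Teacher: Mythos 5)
Your proposal is correct and takes essentially the same route as the paper's own proof: there, too, one tests \eqref{equation_phase_FEM_u} with the nodal basis function $\chi_{j_0}$ at the extremal node, uses acuteness and the zero row-sum of the lumped scheme (your M--matrix computation appears in the paper as the inline estimate \eqref{digm_acute_mesh_max}), exploits the sign of the $\delta$-term for the upper bound, and for the lower bound uses $\zeta(\varphi^n_{j_0})\le 1$ together with induction accumulating a loss of $\Delta t/\epsilon^2$ per step, giving $T/\epsilon^2$ in total. There are no gaps.
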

\begin{proof}
Throughout this proof, 
we use the shorthand notations $u^{n+1}_j = u_h^{n+1}(p_j)$, and 
$\varphi_j^n = \varphi_h^n(p_j)$ for $j\in J$.
We first use an inductive argument to prove the maximum bound.  
We assume that 
$\max_{j\in J} \unjl \leq \max \left( \alpha Q, | u_h^0 |_{0,\infty} \right)$,
which clearly holds for $n=0$, and set 
$\unpjzerol := \max_{j \in J} \unpjl = \max_{\overline\Omega} u_h^{n+1}$.
Since $\mathcal{T}_{h}$ is acute, we have 
$\nabla \chi_i |_\sigma \cdot \nabla \chi_{j} |_\sigma \leq 0$ for $i \neq j$, 
and hence 
\begin{align} 
\big( \zeta(\varphi^{n}_h) \nabla {u}_h^{n+1}, \nabla \chi_{j_0} \big)
& = \sum_{\sigma \in \mathcal{T}_h} \sum_{i\in J} 
\unpil \nabla \chi_i |_\sigma \cdot \nabla \chi_{j_0} |_\sigma 
\int_\sigma \zeta(\vphi_h^{n})\;{\rm{d}}x\nonumber \\ & 
\geq \sum_{\sigma \in \mathcal{T}_h} \unpjzerol \sum_{i \in J}  \nabla \chi_i |_\sigma \cdot \nabla \chi_{j_0} |_\sigma 
\int_\sigma \zeta(\vphi_h^{n})\;{\rm{d}}x= 0. \label{digm_acute_mesh_max}
\end{align}
Hence choosing $\eta_h = \chi_{j_0}$ in \eqref{equation_phase_FEM_u} implies
\begin{align}
\frac{\epsilon^2}{\Delta t} (\unpjzerol - \unjzerol) (1,\chi_{j_0})
& \leq \left( \frac{1}{\epsilon} \delta(\vphi_{j_0}^{n}) (Q - \frac{\unpjzerol}{\alpha} ) - \zeta(\vphi_{j_0}^{n}) \right) (1,\chi_{j_0}) \nonumber \\
& \leq  \frac{1}{\epsilon} \delta(\vphi_{j_0}^{n})(Q - \frac{\unpjzerol}{\alpha } ) (1,\chi_{j_0}).
\nonumber 	
\end{align}
If $\unpjzerol > \max \left( \alpha Q, | u_h^0 |_{0,\infty} \right)$, 
then we have $\unpjzerol < \unjzerol \leq \max 
\left( \alpha Q, | u_h^0 |_{0,\infty} \right)$, 
which is a contradiction,
and thus $\unpjzerol \leq \max \left( \alpha Q, | u_h^0 |_{0,\infty}\right)$.\\
For the minimum bound, we again use an inductive argument. 
We assume that 
$\min_{j\in J}\unjl \geq - \frac{n \Delta t}{\epsilon^2} - |u_h^0|_{0,\infty}$ 
and set $\unpjzerol := \min_{j \in J } \unpjl$. 
Choosing $\eta_h = \chi_{j_0}$ in (\ref{equation_phase_FEM_u}) yields
\begin{equation} \label{eq:min1}
\frac{\epsilon^2}{\Delta t} (\unpjzerol - \unjzerol) (1,\chi_{j_0})
\geq
\left( \frac{1}{\epsilon} \delta(\vphi_{j_0}^{n}) 
(Q - \frac{\unpjzerol}{\alpha} ) - \zeta(\vphi_{j_0}^{n}) \right) 
(1,\chi_{j_0}),
\end{equation}
where we used a similar argument to (\ref{digm_acute_mesh_max}).
If $\unpjzerol \geq 0$ we have nothing to prove, and so it is sufficient
to consider the case $\unpjzerol < 0$. Then 
$Q - \frac{\unpjzerol}{\alpha} > 0$, which together with \eqref{eq:min1}
and \eqref{eq:zeta} implies that 
\[
\frac{\epsilon^2}{\Delta t} (\unpjzerol - \unjzerol) 
\geq
- \zeta(\vphi_{j_0}^{n}) \geq - 1.
\]
Hence we have that
\[
\unpjzerol \geq -\frac{\Delta t}{\epsilon^2} + \unjzerol
\geq - \frac{(n+1)\Delta t}{\epsilon^2} - |u_h^0|_{0,\infty} ,
\]
which proves the inductive assumption and hence yields the desired lower bound.
\end{proof}

\begin{Lemma}
\label{lemma_phase_digm_4_3_deltazeta}
We have $\delta(\vphi) \ul, \zeta(\vphi) \ul \in L^2(0,T; H^1(\Omega))$ and 
\begin{subequations} \label{eq:lem36}
\begin{align}
&\delta(\vphi_h) \uhl \rightarrow \delta(\vphi) \ul \quad \mbox{ in } L^2(0,T; L^2(\Omega)), \label{eq:lem36a} \\ 
&\zeta(\vphi_h) \uhl \rightarrow \zeta(\vphi) u \quad \mbox{ in } L^2(0,T; L^2(\Omega)) \label{eq:lem36b}.
\end{align}
\end{subequations}
\end{Lemma}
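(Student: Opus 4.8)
\emph{Plan.} I would prove the two assertions together, in two stages: first a uniform $L^2(0,T;H^1(\Omega))$ bound on the weighted products (which immediately gives the regularity $\delta(\vphi)\ul,\zeta(\vphi)\ul\in L^2(0,T;H^1(\Omega))$ after identifying the weak limit), and then an Aubin--Lions/Simon compactness argument to upgrade the weak $L^2(0,T;L^2(\Omega))$ convergence already available to strong convergence. Throughout, the decisive new input is the uniform $L^\infty$ bound on $\uhl$ from Lemma~\ref{lemma_phase_max_on_u_h}.

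\emph{Spatial regularity.} I would bound $\zeta(\vphihminus)\ulinehplus$ and $\delta(\vphihminus)\ulinehplus$ in $L^2(0,T;H^1(\Omega))$. The $L^2$ part is immediate, since both weights lie in $[0,C]$ and $\ulinehplus$ is bounded in $L^\infty(0,T;L^2(\Omega))$ by \eqref{equation_digm_4_1_sup_t_u}. For the gradient I use the product rule $\nabla(\zeta(\vphihminus)\ulinehplus)=\zeta(\vphihminus)\nabla\ulinehplus+\tfrac12\ulinehplus\nabla\vphihminus$. Since $\zeta\le1$, the first term satisfies $|\zeta(\vphihminus)\nabla\ulinehplus|^2\le\zeta(\vphihminus)|\nabla\ulinehplus|^2$, which is integrable in space-time by \eqref{equation_digm_4_1_L2_bounds_u}; for the $\delta$-weighted product I would exploit the elementary identity $\delta(s)=\tfrac4\pi(1-s)\zeta(s)\le\tfrac8\pi\zeta(s)$ to reduce $\delta(\vphihminus)\nabla\ulinehplus$ to the same bound, the remaining term $-\tfrac4\pi\vphihminus\ulinehplus\nabla\vphihminus$ being handled exactly as below. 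The second term $\tfrac12\ulinehplus\nabla\vphihminus$ is bounded in $L^\infty(0,T;L^2(\Omega))$ because $\ulinehplus$ is bounded in $L^\infty(0,T;L^\infty(\Omega))$ by Lemma~\ref{lemma_phase_max_on_u_h} and $\nabla\vphihminus$ is bounded in $L^\infty(0,T;L^2(\Omega))$ by \eqref{equation_digm_4_1_varphi_leq_c}. Extracting a weak $L^2(0,T;H^1(\Omega))$ limit and identifying it, via \eqref{eq_digm_all_convergences_4}, \eqref{equation_phase_digm_convergence_of_weights_zeta}, \eqref{equation_phase_digm_convergence_of_weights_delta}, \eqref{equation} and its $\zeta$-analogue, as $\zeta(\vphi)\ul$ respectively $\delta(\vphi)\ul$, yields the claimed regularity.

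\emph{Strong convergence.} To pass from weak to strong $L^2(0,T;L^2(\Omega))$ convergence I would apply an Aubin--Lions--Simon compactness argument. The spatial $H^1$ bound on the $\pm$-interpolants is already in hand; the missing ingredient is a uniform time-derivative bound. Writing $\partial_t(\zeta(\vphih)\uhl)=\tfrac12\uhl\,\partial_t\vphih+\zeta(\vphih)\uhtl$, the first term lies in $L^2(0,T;L^2(\Omega))$ by Lemma~\ref{lemma_phase_max_on_u_h} and \eqref{equation_digm_4_1_varphi_leq_c}. For the second term I would test against $\psi\in W^{1,p}(\Omega)$ with $p>2$, write $(\zeta(\vphih)\uhtl,\psi)=(\uhtl,\zeta(\vphih)\psi)\le\|\uhtl\|_{(H^1(\Omega))'}\,\hnorm{\zeta(\vphih)\psi}$, and control $\hnorm{\zeta(\vphih)\psi}$; the only dangerous contribution is $\tfrac12\psi\nabla\vphih$, which in two dimensions obeys $\|\psi\nabla\vphih\|_{0}\le|\psi|_{0,\infty}\lnormH{\nabla\vphih}\le C\|\psi\|_{1,p}$ by the embedding $W^{1,p}(\Omega)\hookrightarrow L^\infty(\Omega)$ and \eqref{equation_digm_4_1_varphi_leq_c}. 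Hence $\partial_t(\zeta(\vphih)\uhl)$ is bounded in $L^2(0,T;(W^{1,p}(\Omega))')$, and with $\delta'(s)=-\tfrac4\pi s$ the same computation treats $\delta(\vphih)\uhl$. Since $H^1(\Omega)\hookrightarrow\hookrightarrow L^2(\Omega)\hookrightarrow(W^{1,p}(\Omega))'$ and the piecewise-linear and piecewise-constant interpolants differ by $O(\Delta t^{1/2})$ in $L^2(0,T;L^2(\Omega))$ thanks to \eqref{equation_digm_4_1_L2_bound_u_t} and \eqref{equation_digm_4_1_varphi_leq_c_dt}, compactness gives relative strong compactness in $L^2(0,T;L^2(\Omega))$. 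The limit is then pinned down by the weak convergence obtained above, establishing \eqref{eq:lem36a}--\eqref{eq:lem36b}.

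\emph{Main obstacle.} The delicate step is the product $\zeta(\vphih)\uhtl$ (and its $\delta$-analogue): $\uhtl$ is controlled only in the dual space $(H^1(\Omega))'$, whereas the weight $\zeta(\vphih)$ has a gradient bounded only in $L^2(\Omega)$ and not in $L^\infty(\Omega)$, so the naive multiplier estimate fails in $(H^1(\Omega))'$. The resolution combines the maximum principle of Lemma~\ref{lemma_phase_max_on_u_h} with the two-dimensional embedding $W^{1,p}(\Omega)\hookrightarrow L^\infty(\Omega)$ ($p>2$), which allows me to measure this term in the slightly weaker space $(W^{1,p}(\Omega))'$; this is precisely where the restriction to two space dimensions enters.
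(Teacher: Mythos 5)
Your proposal is correct, but it reaches the crucial spatial estimate by a genuinely different route from the paper's. The paper bounds the piecewise-linear product $\delta(\vphih)\uhl$ in $L^2(0,T;H^1(\Omega))$ directly: it writes $\delta(\vphih)\le\delta(\vphihminus)+C|\vphihplus-\vphihminus|$ for $t\in(t_n,t_{n+1})$, splits $\nabla \uhl$ into $\nabla\ulinehplus$ and $\nabla(\ulinehplus-\ulinehminus)$, and absorbs the extra terms using the inverse estimate \eqref{inverse} together with the hypothesis $\Delta t\le Ch^2$ --- which is exactly where the two-dimensional restriction recorded in the Remark after Theorem~\ref{thm_digm_theorem1_u} originates. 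You instead bound the piecewise-constant product $\zeta(\vphihminus)\ulinehplus$ (and its $\delta$-analogue) via the pointwise inequalities $\zeta^2\le\zeta$ and $\delta\le\tfrac{8}{\pi}\zeta$ on $K$, so that \eqref{equation_digm_4_1_L2_bounds_u} applies verbatim and neither \eqref{inverse} nor the coupling $\Delta t\le Ch^2$ is needed within this lemma; your time-derivative estimate then coincides with the paper's $(W^{1,p}(\Omega))'$ computation almost line by line. What your route costs is that the $H^1$ bound and the time-derivative bound live on different interpolants (the product of the linear interpolants is piecewise quadratic in $t$), so the final step is not the textbook Aubin--Lions lemma applied to a single sequence; your $O(\Delta t^{1/2})$ bridging does repair this --- for instance via Simon's translate criterion, estimating $\|\zeta(\vphihminus)\ulinehplus(\cdot+\tau)-\zeta(\vphihminus)\ulinehplus\|_{L^2(0,T-\tau;(W^{1,p}(\Omega))')}\le C\Delta t^{1/2}+C\tau$ and handling finitely many indices separately, or by invoking a discrete Aubin--Lions lemma of Dreher--J\"ungel type --- but this is the one step you merely gesture at and should write out. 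Two smaller corrections: the $L^2(0,T;L^2(\Omega))$ difference of the $\varphi$-interpolants is $O(\Delta t)$ by the bound on $\partial_t\vphih$ in \eqref{equation_digm_4_1_varphi_leq_c}, not by the gradient estimate \eqref{equation_digm_4_1_varphi_leq_c_dt} you cite; and your closing claim that the embedding $W^{1,p}(\Omega)\hookrightarrow L^\infty(\Omega)$ is ``precisely where the restriction to two space dimensions enters'' mis-locates that restriction, since the embedding holds for $p>d$ in any dimension --- because your argument avoids \eqref{inverse} entirely, your proof of this particular lemma would in fact go through in three dimensions (taking $p>3$), whereas the paper's does not, which is a noteworthy by-product of your approach.
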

\begin{proof}
We first obtain a bound on $\delta(\vphi_h) \uhl $ in $L^2(0,T; H^1(\Omega))$ and then we obtain a bound on $\partial_t(\delta(\vphi_h) \uhl )$ in $L^2(0,T; (W^{1,p}(\Omega))')$, for $p \in(2,\infty)$. 
To bound $\lnormH{\nabla (\delta(\vphi_h) \uhl)}^2$ we first note that from Lemma~\ref{lemma_phase_digm_4_1_varphi} we have $\lnormH{\nabla \delta(\vphi_h)}^2 \leq C$, and thus noting Lemma \ref{lemma_phase_max_on_u_h}, we have
\begin{align*}
	\lnormH{\nabla (\delta(\vphi_h) \uhl)}^2 
	& \leq \lnormH{\nabla \delta(\vphi_h)}^2 |\uhl|_{0,\infty}^2 + |\delta(\vphi_h)\nabla \uhl |^2_0  
\leq C + |\delta(\vphi_h)\nabla \uhl |^2_0 .
\end{align*}
Noting that, for $t \in (t_n, t_{n+1})$,
\[
\delta(\vphi_h) 
\leq \delta(\vphi_h^{n}) + C|\vphi_h^{n+1} - \vphi_h^{n} |,
\]
and using the bound on the $L^\infty$ norm of $\delta(\varphi_h)$, we have, 
for $t \in (t_n, t_{n+1})$, 
\begin{align*}
| \delta(\vphi_h) \nabla \uhl |_0^2 & \leq 
C |\delta(\vphi_h^{n}) \nabla \uhl |^2_0
+ C |(\varphi_h^{n+1} - \vphihn) \nabla \uhl|^2_0 
\nonumber \\ & 
\leq C |\delta(\vphi_h^{n})  \nabla \ulinehnp |^2_0
+ C |\delta(\vphi_h^{n}) \nabla (\ulinehnp - \ulinehn) |^2_0 
+ C |(\varphi_h^{n+1} - \vphihn) \nabla \uhl|^2_0 \nonumber \\ & 
\leq C \lnormwH{\nabla \ulinehnp}{\delta(\vphi_h^{n})}^2 + 
C |\nabla(\ulinehnp - \ulinehn)|_{0}^2+ C |\nabla \uhl|^2_{0,\infty} 
\lnormH{\varphi_h^{n+1} - \vphihn}^2 , \nonumber \\ & 
\leq 
C \lnormwH{\nabla \ulinehnp}{\delta(\vphi_h^{n})}^2 + Ch^{-2}( \lnormH{\ulinehnp - \ulinehn}^2+ |\uhl|^2_{0,\infty} \lnormH{\varphi_h^{n+1} - \vphihn}^2) 
\nonumber \\ & 
\leq C \lnormwH{\nabla \ulinehnp}{\zeta(\vphi_h^{n})}^2 + Ch^{-2}( \lnormH{\ulinehnp - \ulinehn}^2+ \lnormH{\varphi_h^{n+1} - \vphihn}^2) ,
\end{align*}
where we have  used the fact that 
$\delta(s) = \frac2\pi (1 + s) (1 - s) = \frac4\pi (1 - s) \zeta(s)$,
recall \eqref{eq:zeta} and, noting that we are restricting ourselves to $\mathbb{R}^2$, 
the inverse estimate 
\begin{equation}
|\nabla v_h |_{0,p} \leq Ch^{-1} | v_h |_{0,p}
\qquad v_h \in S_h,~~\mbox{for}~p\in[1,\infty].
\label{inverse}
\end{equation}
Summing over $n$, multiplying by $\Delta t$ and recalling that $\Delta t\leq Ch^2$ yields, in light of the 
bounds from Lemmas~\ref{lemma_phase_digm_4_1_varphi} and \ref{lemma_phase_digm_4_1_u}, that
\begin{align*}
\timeint{| \delta(\vphi_h) \nabla \uhl |_0^2}
& \leq 
C\timeint{\lnormwH{\nabla \ulinehplus}{\zeta(\vphi_h^{-})}^2} + Ch^{-2}\timeint{ \lnormH{\ulinehplus - \ulinehminus}^2} \nonumber \\
& \quad + Ch^{-2}\timeint{ \lnormH{\vphihplus - \vphihminus}^2 } 
\leq  C + C h^{-2} \Delta t \leq C.
\end{align*}

In what follows, we fix $p > 2$, and bound
\begin{equation}
\label{eq_delta_h_u_ht_bound}
	\timeint{\| \partial_t(\delta(\vphi_h) \uhl) \|_{(W^{1,p}(\Omega))'}^2 }.
\end{equation}
Let $\psi \in W^{1,p}(\Omega)$ be arbitrary. Then, noting Lemma \ref{lemma_phase_max_on_u_h}, we have
\begin{align}
(\partial_t (\delta(\vphi_h) \uhl), &\psi)_{((W^{1,p})', W^{1,p} )} = 
(\partial_t (\delta(\vphi_h) \uhl), \psi) 
\leq C \left| (\varphi_h \partial_t\varphi_{h} \uhl, \psi) \right| 
+ C \left| (\delta(\vphi_h) \uhtl, \psi) \right| \nonumber \\
&\leq C  \lnormH{\psi} \lnormH{\partial_t\varphi_{h}} 
+ \| \uhtl \|_{(H^1(\Omega))' } \| \psi \delta(\vphi_h) \|_{1} \nonumber \\
 &\leq C  \lnormH{\psi} \lnormH{\partial_t\varphi_{h}} 
 + \| \uhtl \|_{(H^1(\Omega))' } \left( \| \psi \|_{1} 
+ \| \psi \|_{0,\infty} \| \delta(\vphi_h) \|_{1} \right) \nonumber \\
 &\leq C  \| \psi \|_{1,p} \left( \lnormH{\partial_t\varphi_{h}} + \| \uhtl \|_{(H^1(\Omega))' }(1+ \| \vphi_h \|_{1} )\right). \nonumber 
\end{align}
Using bounds from Lemmas \ref{lemma_phase_digm_4_1_varphi} and \ref{lemma_phase_digm_4_2_u}, and the continuous embedding $W^{1,p} (\Omega) \hookrightarrow C^0(\overline{\Omega})$, we deduce a bound on (\ref{eq_delta_h_u_ht_bound}). This, combined with the bound $\lnormH{\nabla (\delta(\vphi_h) \uhl)}^2 \leq C$ 
and a well-known compactness results 
yields the desired strong convergence result \eqref{eq:lem36a}.
The proof for $\zeta(\varphi) u$ and \eqref{eq:lem36b} is analogous.
\end{proof}

We define the set
\[
	U := \Set{ (x,t) \in \Omega \times (0,T) | \zeta(\vphi(x, t)) > 0 } .
\]
As $\ul\in L^\infty (\Omega \times (0,T))$,  
the regularity theory for parabolic variational inequalities, see \cite{parabolic_regularity_theory_digm}, gives 
\[
 	\vphi \in L^p(0,T; W^{2,p}(\Omega)), \quad \mbox{and} \quad \partial_t\varphi  \in L^p(0, T; L^p (\Omega)), \quad \mbox{ for } 1 \leq p < \infty .
\]
Thus, by the Aubin--Lions lemma, 
$\vphi \in C^0 (\overline{\Omega \times (0, T)})$, and hence the set $U$ is open.

\begin{Lemma}
\label{lemma_phase_digm_4_4_zeta}
The function $F$ in \eqref{digm_4_3_conv_2} satisfies $F = \chi (U) \zeta(\vphi) \nabla \ul$ almost everywhere in $\Omega \times (0,T)$, where $\chi (U)$ is the characteristic function of $U$.
\end{Lemma}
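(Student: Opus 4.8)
The plan is to identify $F$ by passing to the limit in the discrete Leibniz rule
\[
\zeta(\vphi_h^-)\nabla u_h^+ = \nabla\big(\zeta(\vphi_h^-)\,u_h^+\big) - u_h^+\,\nabla\zeta(\vphi_h^-)
= \nabla\big(\zeta(\vphi_h^-)\,u_h^+\big) - \tfrac12\,u_h^+\,\nabla\vphi_h^- ,
\]
which holds almost everywhere, being a genuine pointwise identity on the interior of each simplex, where $\zeta(\vphi_h^-)=\tfrac12(1+\vphi_h^-)$ and $u_h^+$ are smooth. The left-hand side converges weakly to $F$ by \eqref{digm_4_3_conv_2}, so it suffices to identify the weak limits of the two terms on the right.

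For the first term I would bound $\zeta(\vphi_h^-)u_h^+$ in $L^2(0,T;H^1(\Omega))$: its gradient equals $\zeta(\vphi_h^-)\nabla u_h^+ + \tfrac12 u_h^+\nabla\vphi_h^-$, whose first summand is controlled in $L^2$ by \eqref{equation_digm_4_1_L2_bounds_u} (using $\zeta\le 1$, so $\zeta\,|\nabla u_h^+|\le \zeta^{1/2}|\nabla u_h^+|$) and whose second summand is controlled by the $L^\infty$ bound on $u_h^+$ from Lemma~\ref{lemma_phase_max_on_u_h} together with \eqref{equation_digm_4_1_varphi_leq_c}. Since $\zeta(\vphi_h^-)\to\zeta(\vphi)$ strongly in $L^2$ by Lemma~\ref{lemma_phase_digm_strong_convergence_weights} and $u_h^+\rightharpoonup u$ weakly, the product satisfies $\zeta(\vphi_h^-)u_h^+\rightharpoonup\zeta(\vphi)u$ weakly in $L^2$; combined with the uniform $H^1$ bound this gives $\nabla(\zeta(\vphi_h^-)u_h^+)\rightharpoonup\nabla(\zeta(\vphi)u)$ in $L^2$.

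The real difficulty is the cross term $\tfrac12 u_h^+\nabla\vphi_h^-$, a product of two sequences that a priori converge only weakly ($u_h^+\rightharpoonup u$ and $\nabla\vphi_h^-\rightharpoonup\nabla\vphi$), so the limit of the product is not immediate; I expect this to be the main obstacle. The plan is to upgrade the phase-field convergence to \emph{strong} convergence $\nabla\vphi_h^{(\pm)}\to\nabla\vphi$ in $L^2(0,T;L^2(\Omega))$. This I would obtain by a standard energy argument for the variational inequality \eqref{equation_phase_FEM_va}: testing with an interpolant of the limit $\vphi$, integrating in time and passing to the limit yields $\limsup_h\int_0^T|\nabla\vphi_h^+|_0^2 \le \int_0^T|\nabla\vphi|_0^2$, which together with the weak lower semicontinuity already exploited in the proof of Theorem~\ref{thm_digm_theorem1_varphi} forces convergence of the norms, hence strong convergence of $\nabla\vphi_h^+$ in the Hilbert space $L^2(0,T;L^2(\Omega))$; the same for $\nabla\vphi_h^-$ follows from \eqref{equation_digm_4_1_varphi_leq_c_dt}. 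With this in hand, $\nabla\vphi_h^-\to\nabla\vphi$ strongly while $u_h^+\rightharpoonup u$ weakly, so $u_h^+\nabla\vphi_h^-\rightharpoonup u\,\nabla\vphi$, and passing to the limit in the Leibniz identity gives $F = \nabla(\zeta(\vphi)u) - \tfrac12 u\,\nabla\vphi = \nabla(\zeta(\vphi)u) - u\,\nabla\zeta(\vphi)$ almost everywhere.

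It then remains to rewrite this as $\chi(U)\zeta(\vphi)\nabla u$. On the open set $U$, where $\zeta(\vphi)$ is continuous and strictly positive, $u=\zeta(\vphi)u/\zeta(\vphi)$ lies in $H^1_{\mathrm{loc}}(U)$ and $\nabla(\zeta(\vphi)u) - u\nabla\zeta(\vphi) = \zeta(\vphi)\nabla u$, so $F=\zeta(\vphi)\nabla u$ there. On the complement, $\zeta(\vphi)=0$ means $\vphi=-1$, so both $\zeta(\vphi)u$ and $\vphi+1$ vanish; since the gradient of an $H^1$ function vanishes almost everywhere on each of its level sets, $\nabla(\zeta(\vphi)u)=0$ and $\nabla\zeta(\vphi)=\tfrac12\nabla\vphi=0$ a.e. on $\Omega\times(0,T)\setminus U$, whence $F=0=\chi(U)\zeta(\vphi)\nabla u$ there. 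As a cross-check, the vanishing of $F$ off $U$ also follows directly: writing $\zeta(\vphi_h^-)\nabla u_h^+=\zeta(\vphi_h^-)^{1/2}\big[\zeta(\vphi_h^-)^{1/2}\nabla u_h^+\big]$ and applying Cauchy--Schwarz, the bound \eqref{equation_digm_4_1_L2_bounds_u} together with $\zeta(\vphi_h^-)\to 0$ a.e. on the complement gives $\int_{\{\zeta(\vphi)=0\}} F\cdot g\,{\rm d}x\,{\rm d}t = 0$ for every test field $g$. Combining the two cases yields the claim.
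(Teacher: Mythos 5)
Your proposal is correct, but it takes a genuinely different route from the paper's proof. The paper never establishes strong convergence of $\nabla\varphi_h$; instead it works with the auxiliary quantity $\zeta(\varphi)^2 u$: integrating $(\zeta(\varphi_h)^2 u_h,\psi_{x_i})$ by parts, it derives the identity $(\zeta(\varphi)^2 u)_{x_i} = \zeta(\varphi)u\,\varphi_{x_i} + F_i\,\zeta(\varphi)$, where the troublesome product $\zeta(\varphi_h)u_h\,\varphi_{h,x_i}$ is handled by pairing the \emph{weak} convergence of $\nabla\varphi_h$ with the \emph{strong} $L^2(0,T;L^2(\Omega))$ convergence $\zeta(\varphi_h)u_h \to \zeta(\varphi)u$ of Lemma~\ref{lemma_phase_digm_4_3_deltazeta} --- that is, the compactness is placed on the product $\zeta(\varphi_h)u_h$, not on the phase field gradient. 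You do the opposite: you place the compactness on $\nabla\varphi_h$ via a Minty-type energy argument in the variational inequality, after which only weak convergence of $\zeta(\varphi_h^-)u_h^+$ is needed (strong $\zeta$ times weak $u$, using the $L^\infty$ bound), so for this particular lemma you bypass Lemma~\ref{lemma_phase_digm_4_3_deltazeta} entirely, and with it the inverse estimate \eqref{inverse} and the constraint $\Delta t \le Ch^2$ --- precisely the ingredients that confine the paper's argument to two dimensions. The price is that your extra strong-convergence step must be carried out carefully: testing with $\rho_h = I_h\varphi(t)$ requires the interpolant to be well defined and to converge strongly in $H^1(\Omega)$, so you must invoke the regularity $\varphi \in L^p(0,T;W^{2,p}(\Omega))$, $\varphi \in C^0(\overline{\Omega\times(0,T)})$ that the paper records just before the lemma (the same facts that make $U$ open), and the mass-lumping discrepancies should be controlled via \eqref{numerical_int} with $\eta_h = I_h\varphi - \varphi_h^+$, whose $H^1$ norm is uniformly bounded, since $|\partial_t\varphi_h|_1$ and $|u_h^+|_1$ are not. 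Likewise, when you pass to the limit in $u_h^+\nabla\varphi_h^-$ (strong times weak) and in $\zeta(\varphi_h^-)u_h^+$, you tacitly test the weak convergence of $u_h^+$ against functions that are merely of $L^1$ type; this works, but only because Lemma~\ref{lemma_phase_max_on_u_h} allows \eqref{eq_digm_all_convergences_4} to be upgraded to weak-$*$ convergence in $L^\infty(\Omega\times(0,T))$, so you should say so. On the degenerate set $U^c$ your Stampacchia argument ($\nabla v = 0$ a.e.\ on $\{v=0\}$, applied to $\zeta(\varphi)u$ and to $\varphi+1$) is a more elementary replacement for the paper's weighted Cauchy--Schwarz estimate \eqref{eq_digm_0_6_68}, which you in any case reproduce as a cross-check; on $U$ both proofs divide by $\zeta(\varphi)$ and are equivalent in substance, the paper dividing by $\zeta^2$ inside integrals against test functions where you work with the $H^1_{\rm loc}$ Leibniz rule directly.
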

\begin{proof}
In order to identify $F$ on $U$ we show that $(\zeta(\vphi)^2 \ul)_{x_i} \in L^2(0, T; L^2(\Omega))$ and 
\begin{equation}
\label{equation_in_proof_lemma_phase_digm_4_4_zeta}
(\zeta(\vphi)^2 \ul)_{x_i} = \zeta(\vphi) \ul \varphi_{x_i} + F_i \zeta(\vphi), \ i= 1,2 .
\end{equation}
By Lemma~\ref{lemma_phase_digm_4_3_deltazeta} we have $\zeta(\vphi_h) \uhl \rightarrow \zeta(\vphi) \ul$ in $L^2(0, T; L^2(\Omega))$, so that 
\[
\timeint{ (\zeta(\vphi)^2 \ul, \psi_{x_i})} 
= \lim_{h \rightarrow 0} \timeint{ (\zeta(\vphi_h)^2 \uhl, \psi_{x_i})} , 
\quad \forall \psi \in C^\infty_0(\Omega \times (0,T)).
\]
Using integration by parts on the right hand side integral we have
\begin{align}
\timeint{ (\zeta(\vphi_h)^2 \uhl, \psi_{x_i})} 
&=
- \timeint{ (\zeta(\vphi_h)^2 u_{h,x_i}, \psi) } 
- \timeint{ ((\zeta(\vphi_h)^2)_{x_i} \uhl, \psi )} \nonumber \\
&= - \timeint{(\zeta(\vphi_h) (\zeta(\vphi_h) u_{h,x_i}), \psi )} 
- \timeint{(\zeta(\vphi_h) \uhl \varphi_{h, x_i}, \psi )} . \nonumber 
\end{align}
Since $\zeta(\vphi_h) \uhl \rightarrow \zeta(\vphi) \ul$ in $L^2(0,T; L^2(\Omega))$ (by Lemma~\ref{lemma_phase_digm_4_3_deltazeta}), the dominated convergence theorem implies that $\zeta(\vphi_h) \uhl \psi \rightarrow \zeta(\vphi) \ul \psi$ in $L^2(0,T; L^2(\Omega))$. Using this, and $\nabla \varphi_h \rightharpoonup \nabla \varphi$ in $L^2(0,T; [L^2(\Omega)]^2)$ from (\ref{eq_digm_all_convergences_1}), we have
\[
\timeint{ (\zeta(\vphi_h) \uhl \varphi_{h, x_i}, \psi )} \rightarrow 
\timeint{ ( \zeta(\vphi) \ul \varphi_{ x_i}, \psi )}  , 
\quad \forall \psi \in C^\infty_0(\Omega \times (0,T)).
\]
By (\ref{digm_4_3_conv_2}) and (\ref{equation_phase_digm_convergence_of_weights_zeta}) we have
\[
\timeint{ (\zeta(\vphi_h) (\zeta(\vphi_h) u_{h,x_i}), \psi )} \rightarrow 
\timeint{ (\zeta(\vphi) F_i, \psi )} , 
\quad \forall \psi \in C^\infty_0(\Omega \times (0,T)).
\] 
Thus, as $\psi$ is arbitrary, we have (\ref{equation_in_proof_lemma_phase_digm_4_4_zeta}) almost everywhere.

We now identify $F$ on $U$. Let $\psi \in C^\infty_0(U)$ be arbitrary. Using integration by parts we have
\begin{align*}
- \int_U \ul_{x_i} \psi \, {\rm d}x\,  {\rm d}t
= \int_U \ul \psi_{x_i} \, {\rm d}x\,  {\rm d}t
& = \int_U \zeta(\vphi)^2 \ul \frac{1}{\zeta(\vphi)^2} \psi_{x_i} 
\, {\rm d}x\,  {\rm d}t
\nonumber \\
&= - \int_U (\zeta(\vphi)^2 \ul)_{x_i} \frac{\psi}{\zeta(\vphi)^2} 
\, {\rm d}x\,  {\rm d}t
+ \int_U \frac{1}{\zeta(\vphi)} \ul \varphi_{x_i} \psi \, {\rm d}x\,  {\rm d}t.
\end{align*}
Substituting in (\ref{equation_in_proof_lemma_phase_digm_4_4_zeta}), we have
\[
- \int_U u_{x_i} \psi \, {\rm d}x\,  {\rm d}t
= - \int_U F_i \frac{1}{\zeta(\vphi)} \psi \, {\rm d}x\,  {\rm d}t.
\]
Since $\psi$ is arbitrary, this gives us that $u_{x_i} \zeta(\vphi) = F_i$ almost everywhere on $U$.

It remains to identify $F$ on $U^c := \Omega \times(0,T) \setminus U$. 
Let  $\psi \in C^\infty_0(\Omega \times (0,T))$ be arbitrary. We use that $1 - \chi (U) = 0$ on $U$ to give
\begin{align}
& \left| \int_{U^c}  \zeta(\vphi_h^-) u_{h,x_i}^{+} \psi \, {\rm d}x\,  {\rm d}t
\right|  
= \left| \timeint{ \phaseinth{ \zeta(\vphi_h^-) u_{h,x_i}^{+} (1-\chi(U)) \psi }} \right|  \nonumber \\
 & \leq  \left( \timeint{\phaseinth{ \zeta(\vphi_h^-) | \nabla u^+_h |^2 }} \right) ^{\frac{1}{2}} \left( \timeint{\phaseinth{ \zeta(\vphi_h^-) (1- \chi (U))^2 \psi^2 }} \right) ^{\frac{1}{2}},
\nonumber \\
 & \leq C \left( \timeint{\phaseinth{ \zeta(\vphi_h^-) (1- \chi (U))^2 \psi^2 }} \right)^{\frac{1}{2}},
\label{eq_digm_0_6_68}
\end{align}
where we have recalled (\ref{equation_digm_4_1_L2_bounds_u}).
By (\ref{equation_phase_digm_convergence_of_weights_zeta})
\[
	\left( \timeint{\phaseinth{ \zeta(\vphi_h^-) (1- \chi (U))^2 \psi^2 }} \right) ^{\frac{1}{2}} \rightarrow 0 .
\]
Thus, from (\ref{eq_digm_0_6_68}), we have
\[
\int_{U^c}  \zeta(\vphi_h^-) u_{h,x_i}^{+} \psi \, {\rm d}x\,  {\rm d}t 
\rightarrow 0.
\]
Recalling (\ref{digm_4_3_conv_2}), we have
\[
\timeint{ \phaseinth{ \zeta(\vphi_h^-) u_{h,x_i}^{+} \psi }} \rightarrow \timeint{\phaseinth{ F_i \psi }}.
\]
Thus we conclude that $F_i = 0$ almost everywhere in $U^c$, $i=1,2$.
\end{proof}

\begin{Theorem}
\label{thm_digm_theorem1_u}
The functions $\varphi$ and $u$ in \eqref{eq:conv} satisfy 
\begin{align}
\label{equation_digm_theorem1_u}
& \epsilon^2 \timeint{\psi (\utl, \eta)_{((H^1)', H^1)}} 
+ \timeint{ \psi \int_{\Set{\zeta > 0}}  \zeta(\vphi) \nabla \ul \cdot \nabla \eta \; {\rm{d}}x}
+ \frac{1}{\epsilon\alpha} \timeint{ \psi (\delta(\vphi) \ul, \eta ) }
 \nonumber \\ & \qquad
~~~~~~~~~~~~~~~~~~~~~~~~~~= \frac{Q}{\epsilon} \timeint{\psi (\delta(\vphi) , \eta )}
- \timeint{\psi (\zeta(\vphi), \eta )} 
\end{align}
for an arbitrary $\eta\in H^1(\Omega)$ and $\psi \in C^\infty_0(0,T)$.
\end{Theorem}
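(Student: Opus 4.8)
The plan is to pass to the limit $h\to0$ in the discrete equation \eqref{equation_phase_FEM_ua}, after testing against a suitable discrete approximation of $\eta$, multiplying by $\psi$ and integrating over $(0,T)$. Because the left-hand side of \eqref{equation_phase_FEM_ua} contains the discrete time derivative paired in the lumped inner product, I would take as test function $\eta_h := J_h\eta\in S_h$, where $J_h$ is the lumped $L^2$-projection introduced in the proof of Lemma~\ref{lemma_phase_digm_4_2_u}. This is convenient: by the defining relation of $J_h$ and since $\partial_t u_h\in S_h$, one has $(\partial_t u_h,J_h\eta)_h=(\partial_t u_h,\eta)=(\partial_t u_h,\eta)_{((H^1)',H^1)}$, so the first term becomes exactly a duality pairing. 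Combining the stability bound $\|J_h\eta\|_1\le C\|\eta\|_1$ with the standard approximation property and density, one has $J_h\eta\to\eta$ strongly in $H^1(\Omega)$; since $\eta_h$ is independent of $t$, $\psi\eta_h\to\psi\eta$ strongly in $L^2(0,T;H^1(\Omega))$.

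I would then treat the four groups of terms as follows. For the time-derivative term, $\epsilon^2\int_0^T\psi(\partial_t u_h,\eta)_{((H^1)',H^1)}\,{\rm d}t\to\epsilon^2\int_0^T\psi(\partial_t u,\eta)_{((H^1)',H^1)}\,{\rm d}t$ by the weak convergence \eqref{eq:uht} paired against the fixed $\psi\eta$. For the diffusion term, $\psi\nabla\eta_h\to\psi\nabla\eta$ strongly in $L^2(0,T;[L^2(\Omega)]^2)$ while $\zeta(\varphi_h^-)\nabla u_h^+\rightharpoonup F$ weakly by \eqref{digm_4_3_conv_2}, so the product converges to $\int_0^T\psi(F,\nabla\eta)\,{\rm d}t$. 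Invoking Lemma~\ref{lemma_phase_digm_4_4_zeta} to replace $F$ by $\chi(U)\zeta(\varphi)\nabla u$, and noting that $\zeta(\varphi)=0$ off $U$, this equals $\int_0^T\psi\int_{\{\zeta>0\}}\zeta(\varphi)\nabla u\cdot\nabla\eta\,{\rm d}x\,{\rm d}t$, the second term in \eqref{equation_digm_theorem1_u}.

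The source and reaction terms carry the lumped inner product against integrands not in $S_h$, and here I would use the quadrature estimate \eqref{numerical_int}. Since the discrete inner product sees only nodal values, each such integrand may be replaced by its Lagrange interpolant without changing the value, e.g.\ $(\delta(\varphi_h^-)u_h^+,\eta_h)_h=(I_h[\delta(\varphi_h^-)u_h^+],\eta_h)_h$, with the first factor now in $S_h$. Applying \eqref{numerical_int} with the $H^1$-seminorm placed on the fixed $\eta_h$, the quadrature error is bounded by $Ch\,|\eta_h|_1\,|I_h[\delta(\varphi_h^-)u_h^+]|_0$, which is controlled uniformly in $t$ via the $L^\infty$-bound on $u_h$ from Lemma~\ref{lemma_phase_max_on_u_h}, and hence vanishes after integration against $\psi$. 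The interpolant differs from $\delta(\varphi_h^-)u_h^+$ by a term that is $O(h)$ in $L^2(0,T;L^2(\Omega))$, by the elementwise interpolation estimate together with the uniform bound on $\int_0^T|\nabla(\delta(\varphi_h)u_h)|_0^2\,{\rm d}t$ established in the proof of Lemma~\ref{lemma_phase_digm_4_3_deltazeta}. One may thus pass to the limit using the weak convergence \eqref{equation} for the reaction term and the strong convergences \eqref{equation_phase_digm_convergence_of_weights_zeta}, \eqref{equation_phase_digm_convergence_of_weights_delta} for the two source terms, noting that $\zeta(\varphi_h^-)\in S_h$ directly so the interpolation step is needed only for $\delta$. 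Collecting all limits yields \eqref{equation_digm_theorem1_u}.

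The conceptual heart of the argument is the diffusion term: the sequence $\zeta(\varphi_h^-)\nabla u_h^+$ converges only weakly, and its limit cannot in general be identified with $\zeta(\varphi)\nabla u$ on all of $\Omega\times(0,T)$ because of the degeneracy of $\zeta$. This identification, together with the restriction of the flux to the non-degenerate set $\{\zeta>0\}$, is exactly what Lemma~\ref{lemma_phase_digm_4_4_zeta} supplies, so the genuine obstacle has already been resolved before this theorem; what remains here is the comparatively routine, if delicate, bookkeeping of the mass-lumping quadrature for the degenerate reaction and source terms.
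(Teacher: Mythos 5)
Your proposal is correct and takes essentially the same route as the paper: test \eqref{equation_phase_FEM_ua} with a sequence of discrete test functions converging to $\eta$ in $H^1(\Omega)$, multiply by $\psi$, integrate in time, and pass to the limit term by term using \eqref{eq:uht} for the time derivative, \eqref{digm_4_3_conv_2} together with Lemma~\ref{lemma_phase_digm_4_4_zeta} for the diffusion term, \eqref{equation} for the reaction term, \eqref{equation_phase_digm_convergence_of_weights_delta} and \eqref{equation_phase_digm_convergence_of_weights_zeta} for the source terms, with the quadrature estimate \eqref{numerical_int} controlling the mass-lumping errors. Your choices of $J_h\eta$ as test function and the interpolation bookkeeping for $\delta(\vphi_h^-)u_h^+$ simply make explicit details that the paper's (very terse) proof leaves implicit.
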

\begin{proof}
Choosing arbitrary functions 
$\eta\in H^1(\Omega)$ and $\psi \in C^\infty_0(0,T)$, we know that there exists
a sequence $(\eta_h) \subset S_h$ such that $\eta_h \to \eta$ in $H^1(\Omega)$
as $h\to0$. 
Multiplying (\ref{equation_phase_FEM_ua}) by $\psi$, and integrating over $t$ gives
\begin{align}
\label{equation_digm_theorem1_u_h}
\underbrace{ \epsilon^2 \timeint{\psi \left( \uhtl ,\eta_h \right)_h}}_{(1)}
&+ \underbrace{ \timeint{\psi \left(\zeta(\vphi_h^-) \nabla \ulinehplus ,\nabla \eta_h \right)}}_{(2)} 
+ \underbrace{ \frac{1}{\epsilon\alpha} \timeint{ \psi \left( \delta(\vphi_h^-) \ulinehplus , \eta_h \right)_h }}_{(3)} \nonumber \\
&= \underbrace{ \frac{Q}{\epsilon} \timeint{\psi \left( \delta(\vphi_h^-), \eta_h \right)_h }}_{(4)} 
- \underbrace{ \timeint{\psi \left( \zeta(\vphi_h^-), \eta_h \right)_h }}_{(5)} . 
\end{align}
For all but $(2)$ we use the well known inequality (\ref{numerical_int}).
For $(1)$ we use (\ref{eq:uht}). 
For $(2)$ we use (\ref{digm_4_3_conv_2}) and Lemma~\ref{lemma_phase_digm_4_4_zeta}.
For $(3)$ we use \textcolor{black}{(\ref{equation}).} 
For $(4)$ we use (\ref{equation_phase_digm_convergence_of_weights_delta}). 
For $(5)$ we use (\ref{equation_phase_digm_convergence_of_weights_zeta}).
\end{proof}
\begin{Remark}
Due to the inverse inequality (\ref{inverse}) that is used in the proof of Lemma \ref{lemma_phase_digm_4_3_deltazeta}, our proof of the convergence result in Theorem \ref{thm_digm_theorem1_u} does not extend to $\mathbb{R}^3$. 
\end{Remark}

\setcounter{equation}{0}
\section{Numerical results}
\label{sec:nr}
In this section we display some computational simulations of tumour growth. 
In all computations we use the finite element approximation $\mathbb{P}_h$ that results from taking $\vartheta=1$ in \eqref{eq:strong}.
We use the finite element toolbox Alberta 2.0, \cite{ALBERTABook}, 
to implement our approximation \eqref{eq:fea}. 
In order to increase the efficiency of the computations, we employ 
the mesh refinement strategy presented in \cite{eks}, which gives rise to a fine mesh in the interfacial region where $|\varphi_h^n|<1$, a coarse mesh exterior to the tumour where $\varphi_{h}^{n} = -1$ and a standard sized mesh in the interior of the tumour where $\varphi_{h}^{n} = 1$, for more details on a similar mesh refinement strategy see \cite{Barrett2005}. 
The result of this refinement strategy can be seen in Figure \ref{f:meshes}, which displays enlarged sections of the computational meshes associated with the simulations presented in Figure \ref{f:a10b01} at $t=45$ (left) and Figure \ref{f:a01b01} at $t=7$ (right). 
We denote the maximum diameter of the triangles in the three meshes by $h_{max,f}=\max_{\sigma\in\mathcal T_f^n}h_\sigma, h_{max,m}=\max_{\sigma\in\mathcal T_m^n}h_\sigma$ and $h_{max,c}=\max_{\sigma\in\mathcal T_c^n}h_\sigma$, where
$$
\mathcal T^n_f:= \lbrace \sigma \in \mathcal T_h \, | \, |\varphi_h^n(x)|<0.99 ~\forall\,  x\in \sigma \rbrace, \quad\mathcal T^n_m:= \lbrace \sigma \in \mathcal T_h \, | \, \varphi_h^n(x)=1 ~\forall~\, x \in \sigma\rbrace, 
$$
and
$$
\mathcal T^n_c:= \lbrace \sigma \in \mathcal T_h \, | \, \varphi_h^n(x)=-1 ~\forall\, x\in \sigma \rbrace.
$$
In all our computations we fix 
$h_{\max,f} \leq h_{\max,m} \leq h_{\max,c}$ with
$h_{\max,m} / h_{\max,f} \leq 2^{4}$ and 
$h_{\max,c} / h_{\max,m} \leq 2^{7}$. 
\begin{figure}[htbp]
\begin{center}
\includegraphics[width=.35\textwidth,angle=0]{{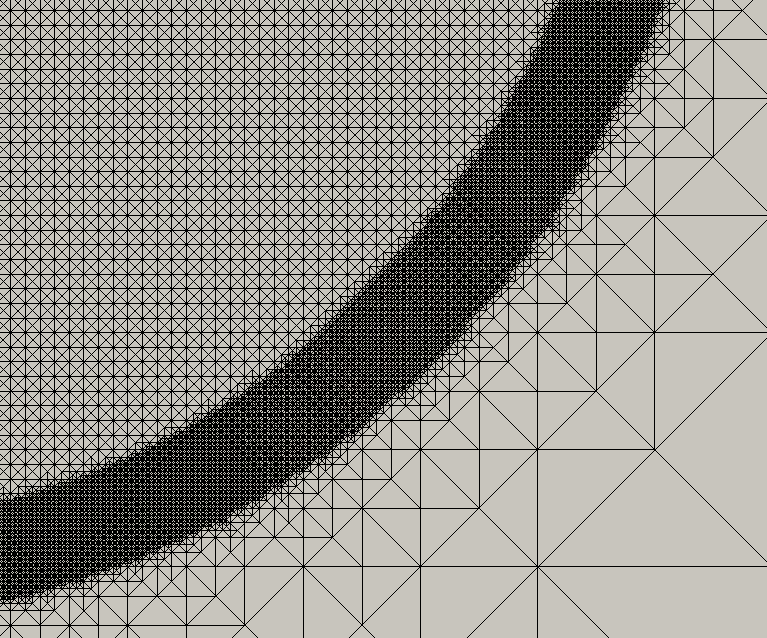}}\hspace{15mm}
\includegraphics[width=.35\textwidth,angle=0]{{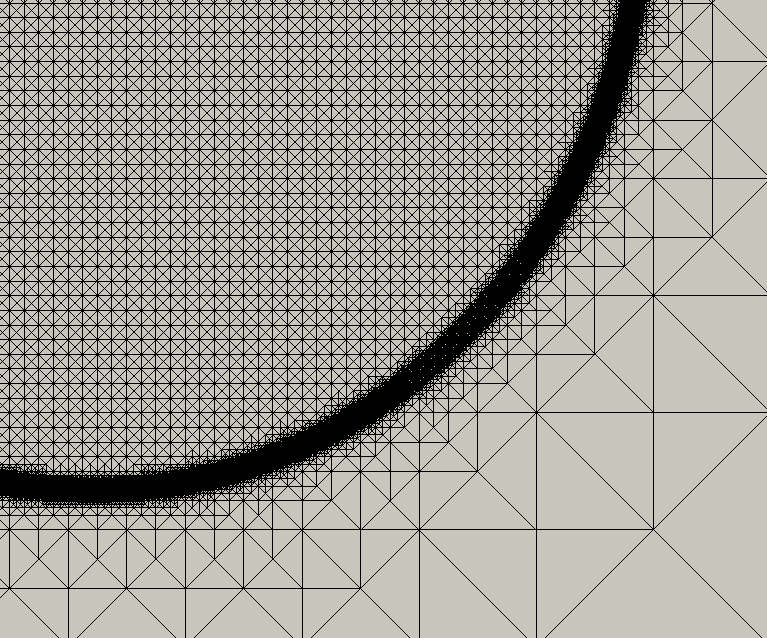}}
\end{center}
\caption{
Enlarged sections of the meshes associated with the results in Figure \ref{f:a10b01} at $t=45$ (left) and Figure \ref{f:a01b01} at $t=7$ (right).}
\label{f:meshes}
\end{figure}
The linear system resulting from (\ref{equation_phase_FEM_u}) was solved using 
a preconditioned conjugated gradient solver
with diagonal preconditioning, while a projected SOR method, 
see \cite{ellock}, was used to solve the system resulting from 
the variational inequality (\ref{equation_phase_FEM_v}).  The results are visualised in Paraview \cite{paraview_book}.\\[2mm]

Noting Lemma \ref{lem_exist}, in all simulations we take $\Delta t<\frac{\varepsilon^2}{\beta}$. In addition we set $h_{max,c}\approx 2.5$ and $h_{max,m}\approx0.02$ and we take the the initial pressure to be $u_0(x)= 0$ for all $x\in \Omega$. 
Unless otherwise stated we take the initial geometry $\Gamma(0)$ to be an ellipse with length $0.5$ and 
height $1$, that is we choose the initial profile $\varphi_0$ as
\begin{equation*} 
\varphi_0(x) = \begin{cases}
1 & r(x) \geq \frac{\epsilon\pi}2, \\
\sin\frac{r(x)}\epsilon &  -\frac{\epsilon\pi}2< r(x) < \frac{\epsilon\pi}2 ,\\
-1 & r(x) \leq -\frac{\epsilon\pi}2 ,\\
\end{cases}
\end{equation*}
where $r(x) = 1 - \sqrt{4 x_1^2 + x_2^2}$.
The values of $Q,~\alpha,~\beta,~\epsilon,~\Delta t$ and $h_{max,f}$ used
in all our simulations are stated in Table \ref{t:param}. 
Due to the symmetry of the problems we consider, in all simulations we only solve in the positive quadrant and apply homogeneous Neumann boundary conditions on the boundaries $x_i=0$, $i=1,2$. 

\begin{table}[!h]\begin{center}
 \begin{tabular}{ ccccccc }
~ & ~~$Q$~~ & ~~$\alpha$~~& $\beta$ &$\epsilon$ & $\Delta t$ & $h_{max,f}$ \\
 \hline
  \hline
Figure \ref{f:a10b01} & $1$ & $1$ & $0.1$& $0.04$ & $10^{-3}$ &0.0048\\
Figure \ref{f:a01b01} & $1$ & $0.1$& $0.1$& $0.01$ & $10^{-4}$ & 0.0024\\
Figure \ref{f:qvary} & $\{0.5,0.75,1.5\}$ & $1$ & $0.1$& $0.04$ & $10^{-3}$ &0.0048\\
Figure \ref{f:avary} & $1$  & $\{0.5,2,7\}$ & $0.1$& $0.04$ & $10^{-3}$ &0.0048\\
Figure \ref{f:bvary} & $1$  & $1$ & $\{0.1,0.2,0.5\}$ & $0.04$ & $10^{-3}$ &0.0048\\
\end{tabular}
\end{center}
\caption{Parameters values for Figures \ref{f:a10b01}  -- \ref{f:bvary}. }
\label{t:param}
\end{table}

\subsection{Radially symmetric solutions}\label{rad}
In the first set of experiments we investigate the accuracy of the numerical scheme as: (i) $\Delta t$ varies, (ii) $\varepsilon$ is reduced, (iii) $\beta$ is increased. To this end, we consider radially symmetric solutions by setting $\Gamma(t) \subset \mathbb{R}^2$ to be a circle with radius $R(t)$, and we express $u$ in polar coordinates such that $u(r, \theta)=u(r)$. In this setting 
(\ref{eq:eks}) reduces to 
\begin{equation}
	u(r) = \frac{1}{4} r^2 + \alpha Q  - \frac{\alpha}{2} R - \frac{1}{4} R^2,
\label{ueqn}
\end{equation}
\begin{equation}
	R'(t) = - \frac{\beta}{R} + \frac{1}{\alpha} u(r) =  -\frac{\beta}{R}+Q -\frac{R}{2}, ~~~R(0) := R_0.
	\label{reqn}
\end{equation}
For each of the simulations we compute the following error 
$$
\mathcal{E}_r := \sum_{n=0}^{50}|R_{h}(t_n)-R(t_n)|^2,
$$
which is the error between the radii, $R_{h}(t_n)$, for $t_n=0.01n$, obtained from the finite element approximation  (\ref{equation_phase_FEM_u}), (\ref{equation_phase_FEM_v}) and the radii, $R(t_n)$, of the corresponding  analytical solution, solved using MATLAB's standard solver for ordinary differential equations, ode45. \\[2mm]
We first investigate an appropriate choice of $\Delta t$, relative to  $h_{max,f}$. To this end, we set $\alpha=Q = R_0=1.0$, $\beta=0.1$ then, for $\varepsilon=0.02$ and $\varepsilon=0.01$, we consider two values of $h_{max,f}$ and four values of $\Delta t$.  \begin{table}[!h]\begin{center}
 \begin{tabular}{ c|cccc }
 &$\Delta t=0.5h_{max,f}$&$\Delta t=0.2h_{max,f}$&$\Delta t=0.1h_{max,f}$&$\Delta t=0.05h_{max,f}$\\
  \hline
$h_{max,f}\approx0.25\varepsilon$&0.0101793 &0.0106555 &0.0107909 &0.0139744 \\
$h_{max,f}\approx0.125\varepsilon$&0.00339958 &0.00225313 &0.00200782 &0.00435587
\end{tabular}
\end{center}
\caption{$\mathcal{E}_r$:  $\varepsilon=0.02$ with $\alpha=Q = R_0=1.0$, $\beta=0.1$, $\Delta t = 0.2h_{max,f}$}
\label{t:dt}
\end{table}
\begin{table}[!h]\begin{center}
 \begin{tabular}{ c|cccc }
 &$\Delta t=0.5h_{max,f}$&$\Delta t=0.2h_{max,f}$&$\Delta t=0.1h_{max,f}$&$\Delta t=0.05h_{max,f}$\\
  \hline
$h_{max,f}\approx 0.25\varepsilon$&0.0252947 &0.0201578&0.0184133 &0.0181777 \\
$h_{max,f}\approx 0.125\varepsilon$&0.00494710 &0.00438694 &0.00434753 &0.00494933 
\end{tabular}
\end{center}
\caption{$\mathcal{E}_r$: $\varepsilon=0.01$ with $\alpha=Q = R_0=1.0$, $\beta=0.1$, $\Delta t = 0.2h_{max,f}$}
\label{t:dt2}
\end{table}
The resulting errors are displayed in Table \ref{t:dt} (for $\varepsilon=0.02$) and Table \ref{t:dt2} (for $\varepsilon=0.01$). From these tables we see that  there is very little 
difference in the errors for the four values of $\Delta t$, thus for the remaining radially symmetric computations, unless otherwise specified, we set $\Delta t=0.2h_{max,f}$.
\begin{Remark}
The convergence result in Section \ref{sec:convergence} relies on the requirement that $\Delta t\leq Ch^2$, 
however from Tables \ref{t:dt} and \ref{t:dt2} we see that it is sufficient to take $\Delta t\leq Ch$ in practice. 
\end{Remark}
\begin{table}[!h]\begin{center}
 \begin{tabular}{ c|cccc }
$~$ &$\varepsilon=0.04$&$\varepsilon=0.02$&$\varepsilon=0.01$&$\varepsilon=0.005$\\
  \hline
$ \varepsilon \approx 4h_{max,f}$ &0.00929059& 0.0106555&0.0201578& 0.0304609\\
$ \varepsilon\approx 8h_{max,f}$ &0.0181973&0.00225313&0.00438694&0.008757458
\end{tabular}
\end{center}
\caption{$\mathcal{E}_r$:  $\alpha=Q = R_0=1.0$, $\beta=0.1$, $\Delta t = 0.2h_{max,f}$}
\label{t:epsr}
\end{table}
Next we consider the accuracy of the scheme as $\varepsilon$, and subsequently $h_{max,f}$ and $\Delta t=0.2h_{max,f}$, are reduced. To this end we set $\alpha=Q = R_0=1.0$, $\beta=0.1$ and consider four values of $\varepsilon$. For each value of $\varepsilon$ we consider two values of $h_{max,f}$.  
The errors $\mathcal{E}_r$ are displayed in  Table \ref{t:epsr}, from which we see that taking $\varepsilon\approx 4h_{max,f}$ yields, for all values of $\varepsilon$, errors that are of the same order of magnitude.  Whereas  taking $\varepsilon\approx 8h_{max,f}$ yields errors that are the same order of magnitude for $\varepsilon=0.02$ and $\varepsilon=0.01$, but gives larger errors for $\varepsilon=0.04$ and $\varepsilon=0.005$.\\[2mm]
We conclude our radially symmetric results by considering the accuracy of the scheme when $\beta$ is increased from $\beta=0.1$ to $\beta=1$. 
In these computations we set $Q = R_0=1.5$, $\alpha=1$ and consider two values of $\varepsilon$. We note that increasing $Q$ and $R_0$ from $1$ to $1.5$ ensures growth of the initial circle, recall (\ref{reqn}). 
For each value of $\varepsilon$ we consider three values of $h_{max,f}$. In all computations we set $\Delta t= 0.05h_{max,f}$, since this results in the restriction $\Delta t<\frac{\varepsilon^2}{\beta}$, recall Lemma \ref{lem_exist}, always being satisfied. 
By comparing Table \ref{t:betar}  with Table \ref{t:epsr} we see that for $\beta=1$, even when the value of $h_{max,f}$ is half the size of the values used when $\beta=0.1$, the errors for $\beta=1$ are larger than the errors for $\beta=0.1$.
We infer from Tables \ref{t:epsr} and \ref{t:betar}  that in order to maintain the accuracy of the scheme, a relationship of the form $\varepsilon = \Lambda h_{max,f}$ should be used, with $\Lambda$ increasing as $\beta$ increases. 
\begin{table}[!h]\begin{center}
 \begin{tabular}{ c|cc }
$~$ &$\varepsilon=0.04$&$\varepsilon=0.02$\\
  \hline
$ \varepsilon \approx 4h_{max,f}$ &0.0891700& 0.0844617 \\
$ \varepsilon \approx 8h_{max,f}$ &0.0468858& 0.0253982 \\
$ \varepsilon\approx 16h_{max,f}$ &0.0350065&0.0164192 
\end{tabular}
\end{center}
\caption{$\mathcal{E}_r$:  $\beta=1$,  $Q = R_0=1.5$, $\alpha=1$, $\Delta t = 0.05h_{max,f}$}
\label{t:betar}
\end{table}

\subsection{Comparison with results in \cite{eks}}
In the second set of experiments, Figures \ref{f:a10b01} and \ref{f:a01b01}, we investigate the effect
of using $\vartheta=1$ in \eqref{eq:strong}, which we included for the
numerical analysis in this paper. We recall that a discretisation of
\eqref{eq:strong} with $\theta=0$ was considered in \cite{eks}, and so we
compare our numerical results with those obtained in that paper.
In particular, 
in Figures \ref{f:a10b01} and \ref{f:a01b01} we display results that correspond to the ones presented respectively in Figures 10 and 12 of \cite{eks}. 
For both simulations we set $\Omega=(-5,5)^2$, $Q=1$ and $\beta=0.1$. 
The values of $\alpha,~\epsilon,~\Delta t$ and $h_{max,f}$, which differ for the two simulations, are stated in Table \ref{t:param}. 
For each figure we display plots of the solution $\varphi_h^n$ in the top row and the pressure $u_h^n$, restricted to the region in which $\varphi_h^n>-1$, i.e. the interior of the tumour, in the bottom row. To be consistent with Figures 10 and 12 in \cite{eks}, the plots in Figure \ref{f:a10b01} are displayed at $t=0,30,45$ and the plots in Figure \ref{f:a01b01} are displayed at $t=0,3,7$. 
For both sets of results we see a close resemblance to the results in \cite{eks}, with the geometries being visually indistinguishable and only small differences in the values of the discrete pressures.
Overall we are satisfied that choosing $\vartheta=1$ in \eqref{eq:strong},
compared to $\vartheta=0$, only has a negligible effect on the numerical
results for the evolutions we are interested in.

 \subsection{The influence of $Q$, $\alpha$ and $\beta$}
In the remaining figures, Figures \ref{f:qvary} -- \ref{f:bvary}, we present some parameter studies that investigate 
the influence of the parameters $Q$, $\alpha$ and $\beta$, respectively. 
In all the simulations we set $\Omega=(-10,10)^2$ and we display the pressure $u_h^n$ in the region in which $\varphi_h^n>-1$. Associated with this parameter study we note that in Section 3.1.2 of \cite{eks} a linear stability analysis on a more complicated version of the model (\ref{eq:eks}), that includes an additional curvature term on the left hand side of (\ref{eq_f2_uOnGamma}), yields steady state circular solutions if $3\alpha\beta > 2Q^3$. 

Figure \ref{f:qvary} displays $u_h^n$ obtained by setting $\alpha=1$, $\beta=0.1$ and $Q=0.5$ (top row), $Q=0.75$ (middle row) and $Q=1.5$ (bottom row). From this figure we see that for $Q=0.5$ the ellipse evolves to form the expected steady state circular solution, since $3\alpha\beta = 0.3 > 0.25 = 2Q^3$. For $Q=0.75$ the ellipse extends in the $x_2$ direction to produce an elongated geometry that is rounded at both ends. This geometry is reminiscent of the simulation displayed in Figure 8 of \cite{eks} that relates to the thin film evolution analysed in the Appendix of \cite{eks}. When $Q=1.5$ the ellipse evolves into a more complex geometry, with a rounded and compact structure.

From Figures \ref{f:a10b01} and \ref{f:a01b01} we saw the effect that varying $\alpha$ has on the solution, with $\alpha=1$ giving rise to a geometry with four distinct branches, while $\alpha=0.1$ leads to more rounded and compact structure. We investigate the effect of varying $\alpha$ further in Figure \ref{f:avary} in which we set $Q=1$ and $\beta=0.1$. We display $\alpha=0.5$ (top row), $\alpha=2$ (middle row) and $\alpha=7$ (bottom row). From this figure we see that $\alpha=0.5$ gives rise to a rounded and compact structure similar to the one seen in the bottom row of Figure \ref{f:qvary}, while for $\alpha=2$ four branches are present at $t=125$ and by $t=170$ these have split and evolved into eight branches. Taking $\alpha=7$ yields the expected steady state circular solution, since we have $3\alpha\beta = 2.1 > 2 = 2Q^3$. 

Finally, in Figure \ref{f:bvary} we see the effect that varying $\beta$ has on the solution. We set  $Q=\alpha=1$ and display $\beta=0.5$ (top row) and $\beta=0.2$ (middle row), together with $\beta=0.1$ (bottom row). Here we display $\beta=0.1$ solely for comparison purposes, since it is the same as the solution from Figure \ref{f:a10b01} except that here $\Omega=(-10,10)^2$ rather than $\Omega=(-5,5)^2$. 
In this figure we see that when $\beta=0.5$ the curvature term dominates the motion and the ellipse evolves to a shrinking circle that has almost disappeared by $t=2$. When $\beta=0.2$ an elongated geometry with rounded ends is observed that is similar to the one displayed in the middle row of Figure \ref{f:qvary}.

\begin{figure}[htbp]
\begin{center}
\subfloat[t=0]{\includegraphics[height=.15\textheight,angle=0]{{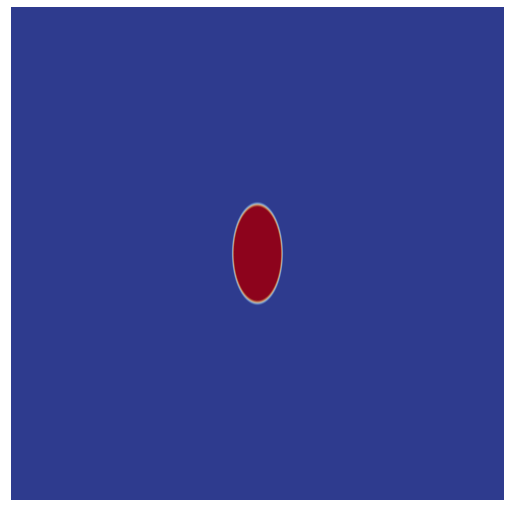}}}\hspace{0mm}
\subfloat[t=30]{\includegraphics[height=.15\textheight,angle=0]{{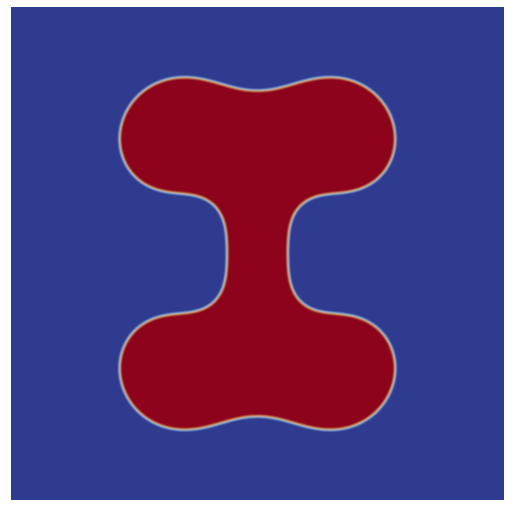}}}\hspace{0mm}
\subfloat[t=45]{\includegraphics[height=.15\textheight,angle=0]{{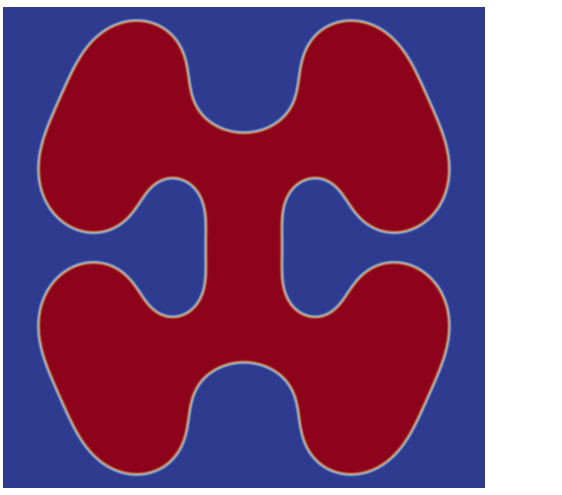}}}\hspace{0mm}
\end{center}
\begin{center}
\subfloat[t=0]{\includegraphics[height=.15\textheight,angle=0]{{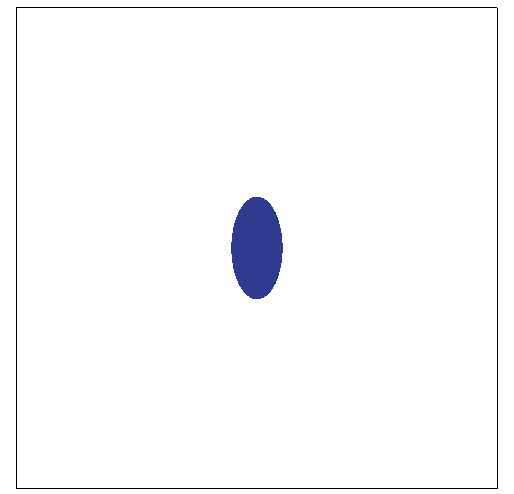}}}\hspace{0mm}
\subfloat[t=30]{\includegraphics[height=.15\textheight,angle=0]{{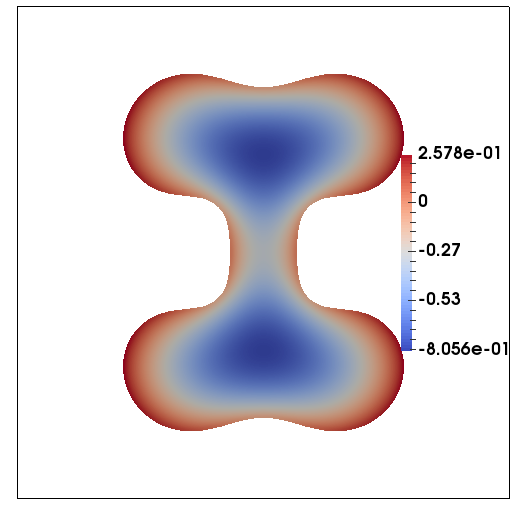}}}\hspace{0mm}
\subfloat[t=45]{\includegraphics[height=.15\textheight,angle=0]{{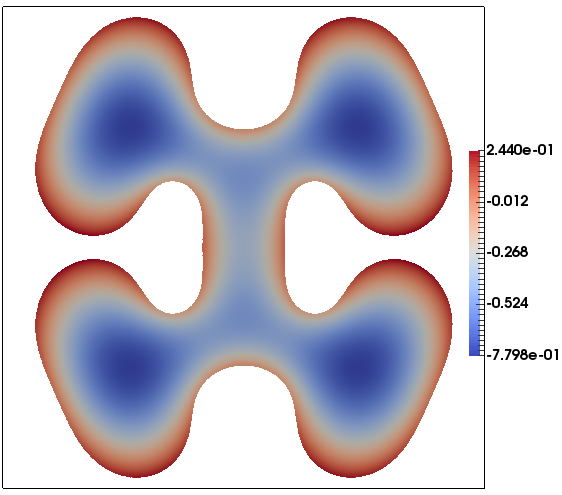}}}\hspace{0mm}
\end{center}
\caption{Simulations with $\Omega=(-5,5)^2$, $\alpha = 1$, $\beta = 0.1$ and $Q=1$: $\varphi_h$ (upper row) and $u_h$ (lower row).}
\label{f:a10b01}
\end{figure}

\begin{figure}[htbp]
\begin{center}
\subfloat[t=0]{\includegraphics[height=.15\textheight,angle=0]{{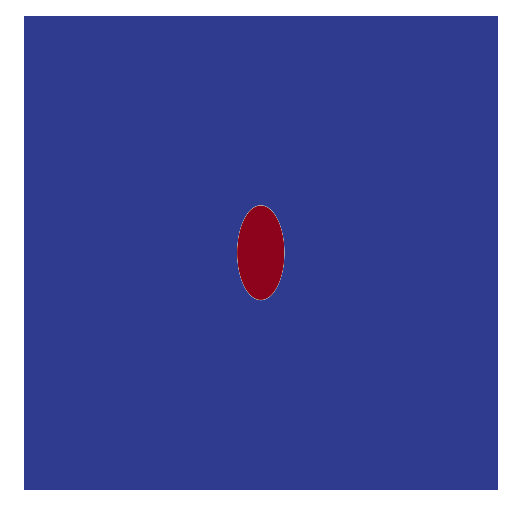}}}\hspace{0mm}
\subfloat[t=3]{\includegraphics[height=.15\textheight,angle=0]{{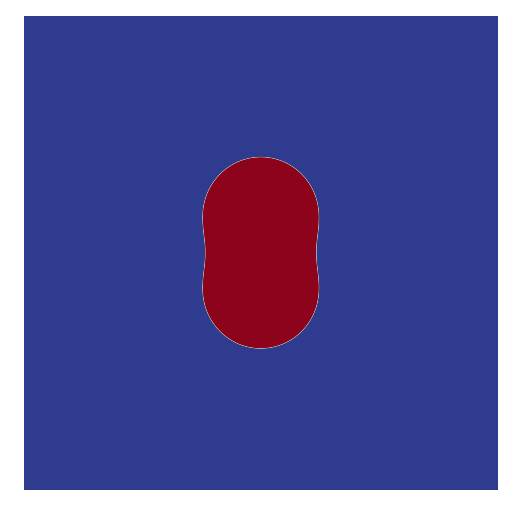}}}\hspace{0mm}
\subfloat[t=7]{\includegraphics[height=.15\textheight,angle=0]{{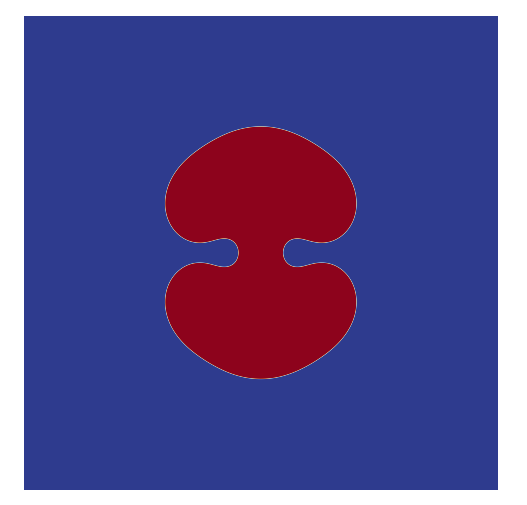}}}\hspace{0mm}
\end{center}
\begin{center}
\subfloat[t=0]{\includegraphics[height=.15\textheight,angle=0]{{pressure_id.png}}}\hspace{0mm}
\subfloat[t=3]{\includegraphics[height=.15\textheight,angle=0]{{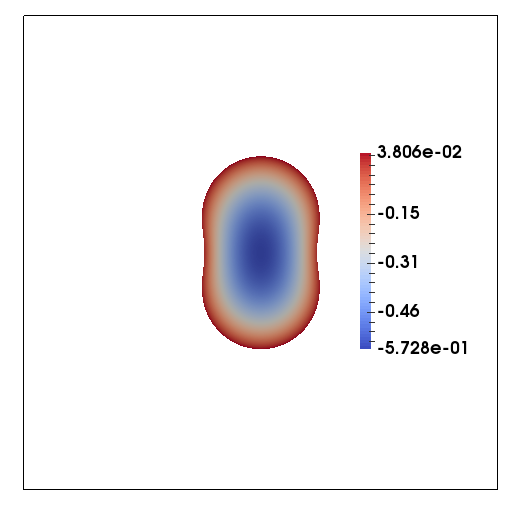}}}\hspace{0mm}
\subfloat[t=7]{\includegraphics[height=.15\textheight,angle=0]{{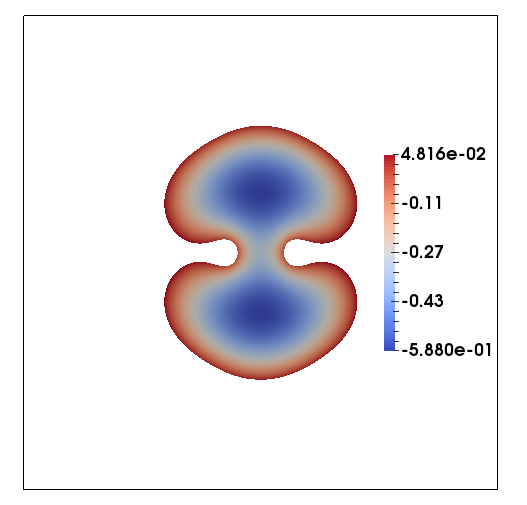}}}\hspace{0mm}
\end{center}
\caption{Simulations with $\Omega=(-5,5)^2$, $\alpha = 0.1$, $\beta = 0.1$ and $Q=1$: $\varphi_h$ (upper row) and $u_h$ (lower row).}
\label{f:a01b01}
\end{figure}

\begin{figure}[htbp]
\centering
\subfloat[t=0]{\includegraphics[height=.22\textheight,angle=0]{{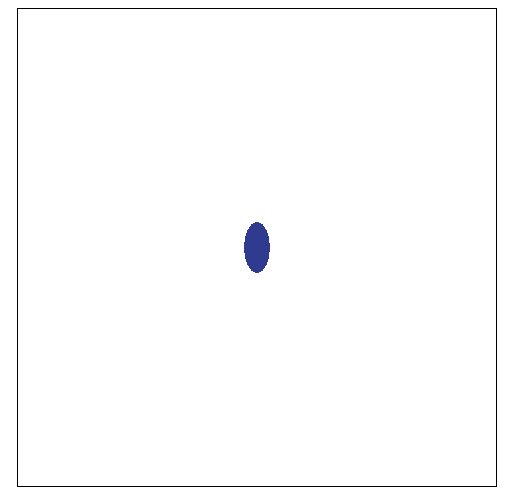}}}\hspace{-1mm}
\subfloat[t=2]{\includegraphics[height=.22\textheight,angle=0]{{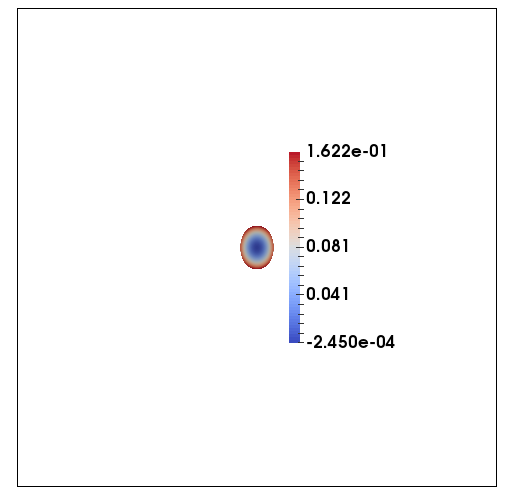}}}\hspace{-1mm}
\subfloat[t=25]{\includegraphics[height=.22\textheight,angle=0]{{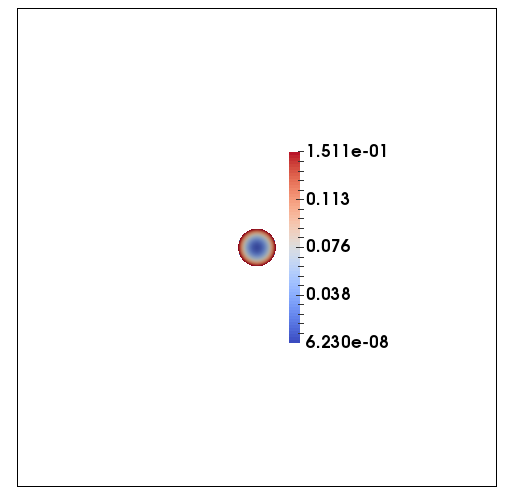}}}\hspace{0mm}
\subfloat[t=0]{\includegraphics[height=.22\textheight,angle=0]{{pressure_id_big.png}}}\hspace{-1mm}
\subfloat[t=140]{\includegraphics[height=.22\textheight,angle=0]{{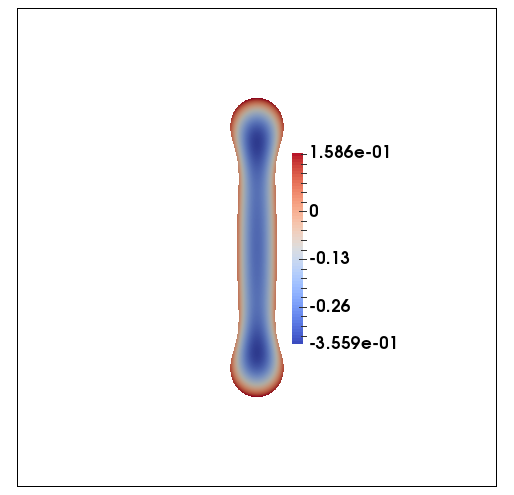}}}\hspace{-1mm}
\subfloat[t=230]{\includegraphics[height=.22\textheight,angle=0]{{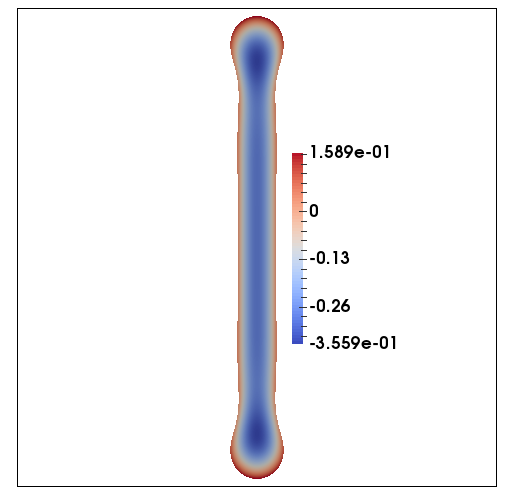}}}\hspace{0mm}
\subfloat[t=0]{\includegraphics[height=.22\textheight,angle=0]{{pressure_id_big.png}}}\hspace{-1mm}
\subfloat[t=7.5]{\includegraphics[height=.22\textheight,angle=0]{{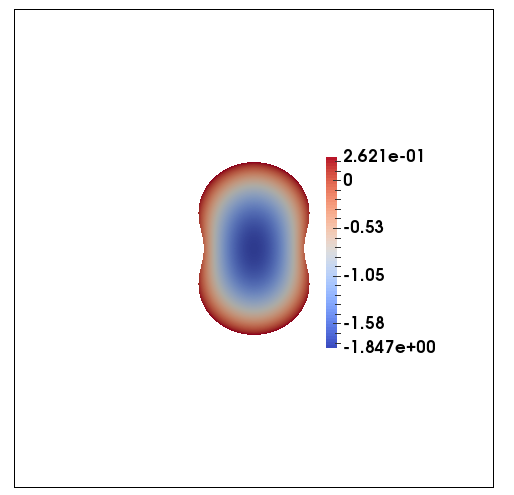}}}\hspace{-1mm}
\subfloat[t=13]{\includegraphics[height=.22\textheight,angle=0]{{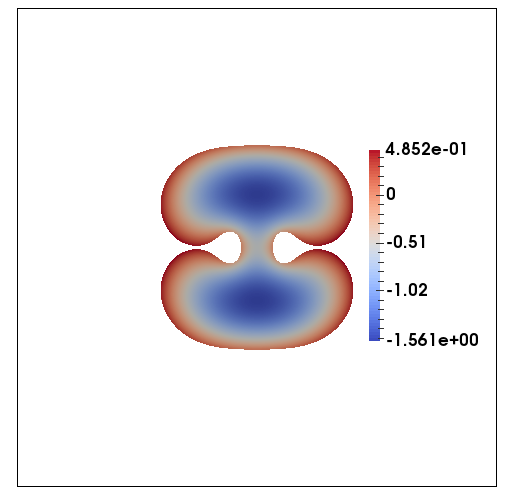}}}\hspace{0mm}
\caption{Simulations of $u_h$ with $\Omega=(-10,10)^2$, $\alpha = 1$, $\beta = 0.1$ and $Q=0.5$ (top row), $Q=0.75$ (middle row), $Q=1.5$ (bottom row).}
\label{f:qvary}
\end{figure}

\begin{figure}[htbp]
\centering
\subfloat[t=0]{\includegraphics[height=.22\textheight,angle=0]{{pressure_id_big.png}}}\hspace{-1mm}
\subfloat[t=12.5]{\includegraphics[height=.22\textheight,angle=0]{{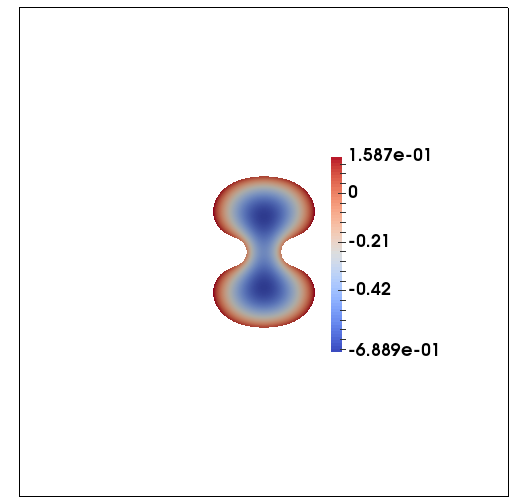}}}\hspace{-1mm}
\subfloat[t=19.5]{\includegraphics[height=.22\textheight,angle=0]{{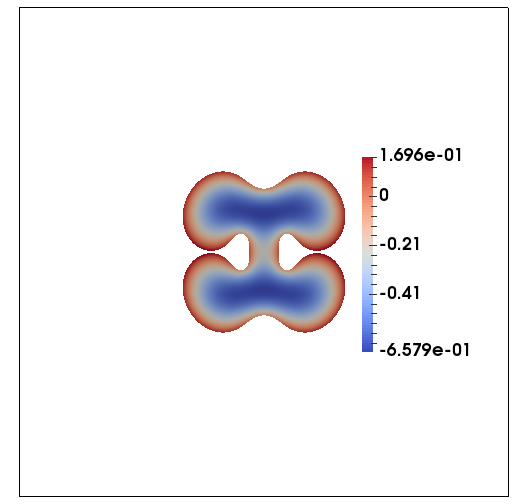}}}\hspace{0mm}
\subfloat[t=0]{\includegraphics[height=.22\textheight,angle=0]{{pressure_id_big.png}}}\hspace{-1mm}
\subfloat[t=125]{\includegraphics[height=.22\textheight,angle=0]{{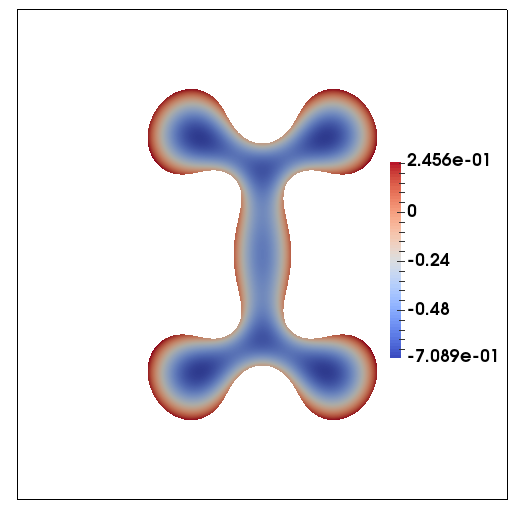}}}\hspace{-1mm}
\subfloat[t=170]{\includegraphics[height=.22\textheight,angle=0]{{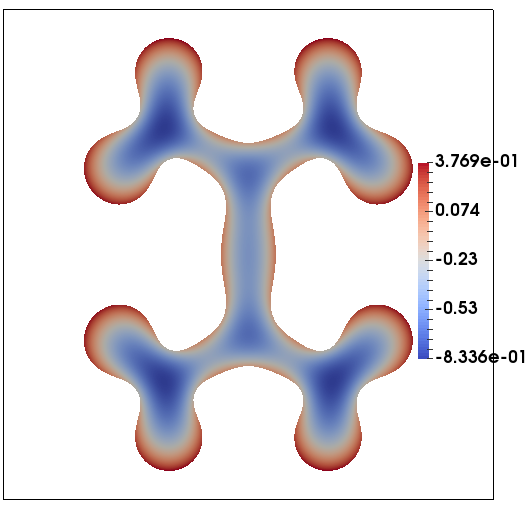}}}\hspace{0mm}
\subfloat[t=0]{\includegraphics[height=.22\textheight,angle=0]{{pressure_id_big.png}}}\hspace{-1mm}
\subfloat[t=2.5]{\includegraphics[height=.22\textheight,angle=0]{{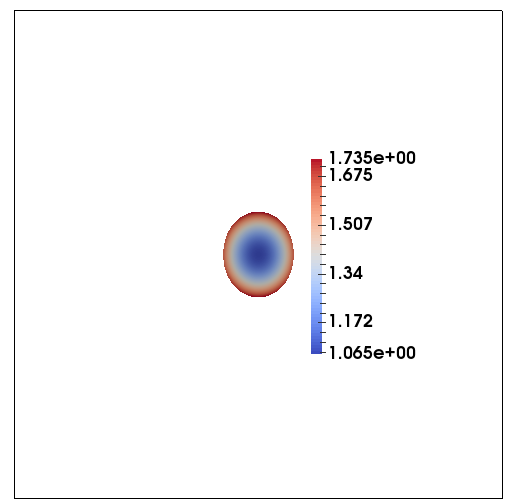}}}\hspace{-1mm}
\subfloat[t=200]{\includegraphics[height=.22\textheight,angle=0]{{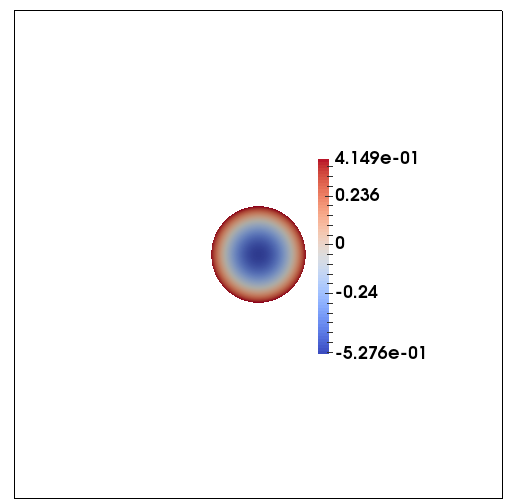}}}\hspace{0mm}
\caption{Simulations of $u_h$ with $\Omega=(-10,10)^2$, $Q = 1$, $\beta = 0.1$ and $\alpha=0.5$ (top row), $\alpha=2$ (middle row), $\alpha=7$ (bottom row).}
\label{f:avary}
\end{figure}

\begin{figure}[htbp]
\centering
\subfloat[t=0]{\includegraphics[height=.22\textheight,angle=0]{{pressure_id_big.png}}}\hspace{-1mm}
\subfloat[t=0.5]{\includegraphics[height=.22\textheight,angle=0]{{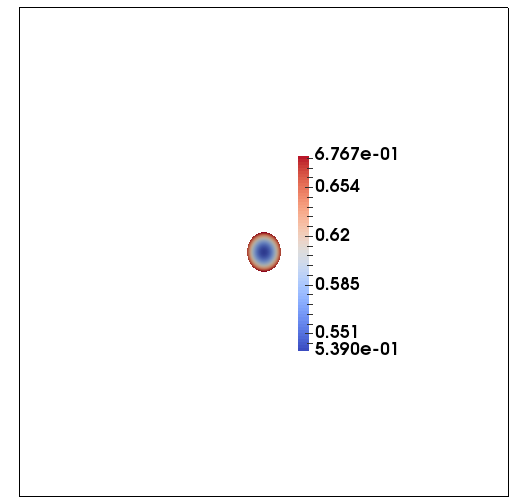}}}\hspace{-1mm}
\subfloat[t=2]{\includegraphics[height=.22\textheight,angle=0]{{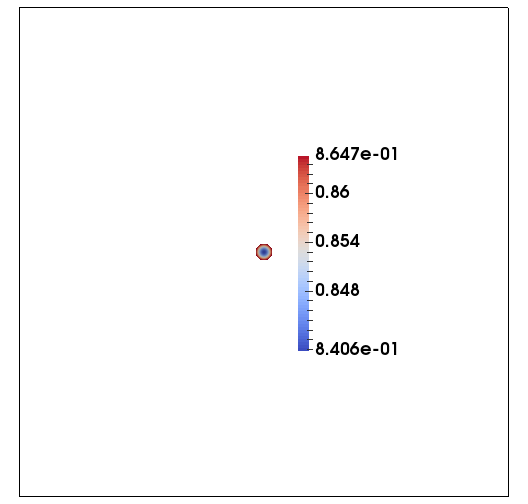}}}\hspace{0mm}
\subfloat[t=0]{\includegraphics[height=.22\textheight,angle=0]{{pressure_id_big.png}}}\hspace{-1mm}
\subfloat[t=115]{\includegraphics[height=.22\textheight,angle=0]{{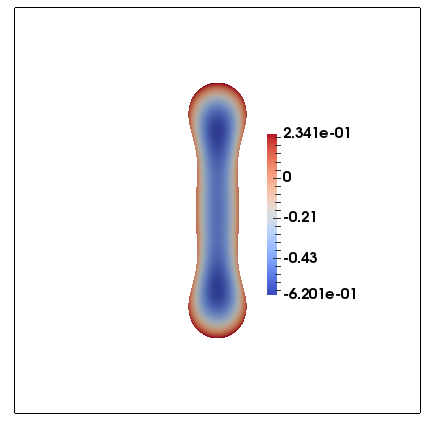}}}\hspace{-1mm}
\subfloat[t=180]{\includegraphics[height=.22\textheight,angle=0]{{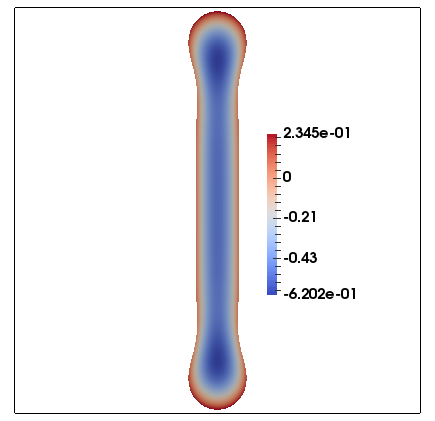}}}\hspace{0mm}
\subfloat[t=0]{\includegraphics[height=.22\textheight,angle=0]{{pressure_id_big.png}}}\hspace{-1mm}
\subfloat[t=30]{\includegraphics[height=.22\textheight,angle=0]{{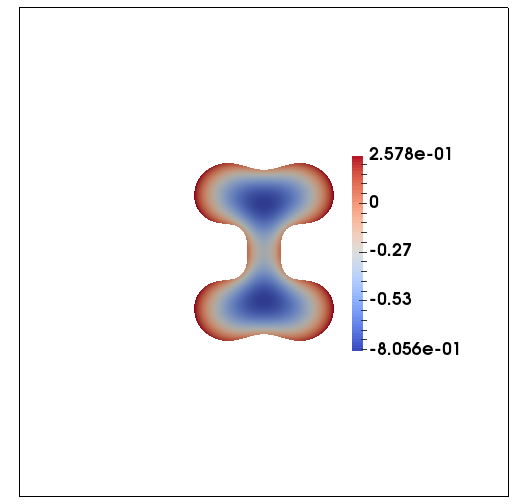}}}\hspace{-1mm}
\subfloat[t=45]{\includegraphics[height=.22\textheight,angle=0]{{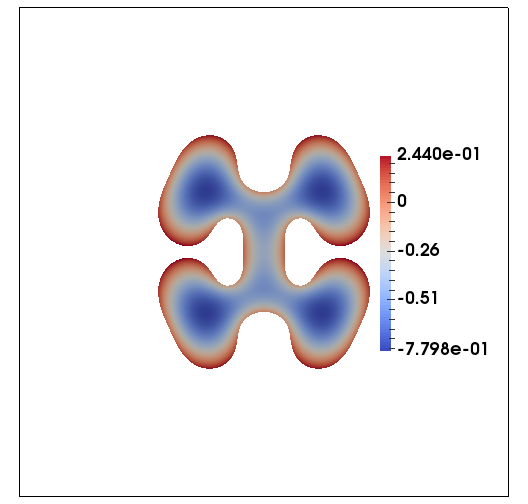}}}\hspace{0mm}
\caption{Simulations of $u_h$ with $\Omega=(-10,10)^2$, $Q = 1$, $\alpha= 1$ and $\beta=0.5$ (top row), $\beta=0.2$ (middle row), $\beta=0.1$ (bottom row).}
\label{f:bvary}
\end{figure}

\end{document}